\theoremstyle{definition}
\newtheorem{ques}{Question}[section]
\newtheorem{theo}{Theorem}[section]
\newtheorem{lem}{Lemma}[section]
\newtheorem{cor}{Corollary}[section]
\newtheorem{exm}{Example}[section]
\newtheorem{defi}{Definition}[section]
\newtheorem{rem}{Remark}[section]
\newcommand{\ol}{\overline}
\newcommand{\be}{\begin{equation}}
\newcommand{\ee}{\end{equation}}
\newcommand{\beas}{\begin{eqnarray*}}
\newcommand{\eeas}{\end{eqnarray*}}
\newcommand{\bea}{\begin{eqnarray}}
\newcommand{\eea}{\end{eqnarray}}
\numberwithin{equation}{section}
\begin{document}
\title[Some results on the unique range sets]{Some results on the unique range sets}
\date{}
\author[B. Chakraborty, et al.]{Bikash Chakraborty$^{1}$, Jayanta Kamila$^{2}$, Amit Kumar Pal$^{3}$ and Sudip Saha$^{4}$}
\date{}
\address{$^{1}$Department of Mathematics, Ramakrishna Mission Vivekananda Centenary College, Rahara,
West Bengal 700 118, India.}
\email{bikashchakraborty.math@yahoo.com, bikash@rkmvccrahara.org}
\address{$^{2}$Department of Mathematics, Ramakrishna Mission Vivekananda Centenary College, Rahara,
West Bengal 700 118, India.}
\email{kamilajayanta@gmail.com}
\address{$^{3}$Department of Mathematics, University of Kalyani, Kalyani, West Bengal 741 235, India.}
\email{mail4amitpal@gmail.com}
\address{$^{4}$Department of Mathematics, Ramakrishna Mission Vivekananda Centenary College, Rahara,
West Bengal 700 118, India.}
\email{sudipsaha814@gmail.com}
\maketitle
\let\thefootnote\relax
\footnotetext{2010 Mathematics Subject Classification: 30D30, 30D20, 30D35.}
\footnotetext{Key words and phrases: Unique range set, Weighted set sharing, Value distribution theory.}
\begin{abstract} In this paper, we exhibit the equivalence between different notions of unique range sets, namely, unique range sets, weighted unique range sets and weak-weighted unique range sets under certain conditions.\par Also, we present some uniqueness theorems which show how two meromorphic functions
are uniquely determined by their two finite shared sets. Moreover, in the last section, we make some observations that help us to construct other new classes of unique range sets.
\end{abstract}
\section{Introduction: Unique range sets}
We use $M(\mathbb{C})$ to denote the field of all meromorphic functions. Let $f\in M(\mathbb{C})$ and $S\subset\mathbb{C}\cup\{\infty\}$ be a non-empty set with distinct elements. We set $$E_{f}(S)=\bigcup\limits_{a\in S}\{z~:~f(z)-a=0\},$$
where a zero of $f-a$ with multiplicity $m$ counts $m$ times in $E_{f}(S)$. Let $\ol{E}_{f}(S)$ denote the collection of distinct elements in $E_{f}(S)$.\par
Let $g\in M(\mathbb{C})$. We say that two functions $f$ and $g$ share the set $S$ CM (resp. IM) if $E_{f}(S)=E_{g}(S)$ (resp. $\overline{E}_{f}(S)=\overline{E}_{g}(S)$).\par
F. Gross (\cite{G}) first studied the uniqueness problem of meromorphic functions that share distinct sets instead of values. Since then, the uniqueness theory of meromorphic functions under set sharing environment has become one of the important branch in the value distribution theory.\par
F. Gross (\cite{G}) proved that there exist three finite sets $S_{j}~(j=1,2,3)$  such that if two non-constant entire functions $f$ and $g$ share them, then $f\equiv g$. Later, in (\cite{G2}), he asked the following question:
\begin{ques}\label{q1}(\cite{G2})
Can one find two (or possible even one) finite set $S_{j}~(j=1,2)$ such that if two non-constant entire functions $f$ and $g$ share them, then $f\equiv g$?
\end{ques}
In 1982, F. Gross and C. C. Yang (\cite{GY}) first ensured the existence of such  set. They proved that if two non-constant entire functions $f$ and $g$ share the set $S=\{z\in \mathbb{C}:e^{z}+z=0\}$, then $f\equiv g$.\par
Moreover, this type of set was termed as a unique range set for entire functions. Later, similar definition for meromorphic functions was also introduced in literature.
\begin{defi}(\cite{YY})
Let $S\subset \mathbb{C}\cup\{\infty\}$; $f$ and $g$ be two non-constant meromorphic (resp. entire) functions. If $E_{f}(S)=E_{g}(S)$ implies $f\equiv g$, then $S$ is called a unique range set for meromorphic (resp. entire) functions or in brief URSM (resp. URSE).
\end{defi}
Here, we note that the set provided by Gross and Yang (\cite{GY}) was an infinite set. Thus after the introduction to the idea of unique range sets, many efforts were made to seek unique range sets with cardinalities as small as possible.\par
In 1994, H. X. Yi (\cite{H94}), settled the question of Gross by exhibitting a unique range set for entire functions with $15$ elements.\par Next year, P. Li and C. C. Yang (\cite{LY}) exhibited a unique range set for meromorphic (resp. entire) functions with $15$ (resp. $7$) elements. They considered the zero set of the following polynomial:
\begin{equation}\label{yi}
P(z)=z^{n}+az^{n-m}+b,
\end{equation}
where $a$ and $b$ are two non-zero constants such that $z^{n}+az^{n-m}+b=0$ has no multiple roots. Also, $m\geq 2~(resp. ~~1)$, $n > 2m+10~(resp. ~~2m+4)$ are integers with $n$ and $n-m$ having no common factors.\par
In 1996, H. X. Yi (\cite{H96}) further improved the result of Li and  Yang (\cite{LY}) and obtained a unique range set for meromorphic functions with $13$ elements.\par
In 2002, T. T. H. An (\cite{An}) exhibited another new class of unique range set for meromorphic functions with $13$ elements by considering the zero set of the following polynomial:
\begin{equation}\label{an}
P(z)=z^{n}+az^{n-m}+bz^{n-2m}+c,
\end{equation}
where it was assumed that $P(z)=0$ has no multiple roots, $a,b,c\in \mathbb{C}\setminus\{0\}$ such that $a^{2}\not=4b$. Also, $n$ and $2m$ are two positive integers such that $n$ and $2m$ have no common factors and $n > 8 + 4m$.\par
But till date the unique range set for meromorphic functions with $11$ elements is the smallest available unique range set for meromorphic functions obtained by G. Frank and M. Reinders (\cite{FR}). They studied the zero set of the following polynomial:
\begin{equation}\label{fr}
P(z)=\frac{(n-1)(n-2)}{2}z^{n}-n(n-2)z^{n-1}+\frac{n(n-1)}{2}z^{n-2}-c,
\end{equation}
where $n\geq 11$ and $c\not= 0,1$.\par
In 2007, T. C. Alzahary (\cite{A}) exhibited another new class of unique range set for meromorphic functions with $11$ elements by considering the zero set of the following polynomial:
\begin{equation}\label{al}
P(z)=az^{n}-n(n-1)z^{2}+2n(n-2)bz-(n-1)(n-2)b^{2},
\end{equation}
where $a$ and $b$ are two non-zero complex numbers satisfying $ab^{n-2}\not=1,2$ and $n\geq 11$. \par
Recently, another new class of unique range set for meromorphic functions with $11$ elements were exhibited in (\cite{BC1}) using the zero set of the following polynomial:
\begin{equation}\label{bcj}
P(z)=z^{n}-\frac{2n}{n-m}z^{n-m}+\frac{n}{n-2m}z^{n-2m}+c,
\end{equation}
where $c$ is any complex number satisfying $|c|\not=\frac{2m^2}{(n-m)(n-2m)}$ and $c\not=0,-\frac{1-\frac{2n}{n-m}+\frac{n}{n-2m}}{2}$ and $m\geq 1$, $n>\max\{ 2m+8,4m+1\}$.\par
\medbreak
Until now, the best results were given by H. Fujimoto (\cite{F1}) in 2000. He gave a generic unique range set for meromorphic function of at least $11$ elements when multiplicities are counted. To state his results, we need to explain some definitions.
\begin{defi}(\cite{F1, F2})
Let $P(z)$ be a non-constant monic polynomial. We call $P(z)$ a \enquote{uniqueness polynomial in broad sense} if  $P(f)\equiv P (g)$ implies $f \equiv g$ for any two non-constant meromorphic functions $f,~g$; while a \enquote{uniqueness polynomial} if  $P(f)\equiv cP (g)$ implies $f \equiv g$ for any two non-constant meromorphic functions $f,~g$ and non-zero constant $c$.
\end{defi}
For a discrete subset $S=\{a_{1}, a_{2},\ldots, a_{n}\}\subset \mathbb{C}$ ($a_{i}\not=a_{j}$), we consider the following polynomial
\begin{equation}\label{jkms1}
  P(z)=(z-a_{1})(z-a_{2})\ldots(z-a_{n})
\end{equation}
Assume that the derivative $P'(z)$ has $k$ distinct zeros $d_1, d_2, \ldots, d_k$ with multiplicities $q_1, q
_2,\ldots,q_{k}$ respectively. Under the assumption that
\begin{equation}\label{jkms2}
  P(d_{l_{s}})\not=P(d_{l_{t}})~~~~(1\leq l_{s}< l_{t}\leq k).
\end{equation}
H. Fujimoto (\cite{F1}) proved the following theorem:
\begin{theo}\label{1111}(\cite{F1})
 Let $P(z)$ be a \enquote{uniqueness polynomial} of the form (\ref{jkms1}) satisfying the condition (\ref{jkms2}). Moreover, either $k\geq3$ or $k=2$ and $\min\{q_{1},q_{2}\}\geq 2$.\par If $S$ is the set of zeros of $P(z)$, then $S$ is a unique range set for meromorphic (resp. entire) function whenever $n>2k+6$ (resp. $n>2k+2$).
\end{theo}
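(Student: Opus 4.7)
The plan is to argue by contradiction: assume $E_f(S) = E_g(S)$ for non-constant meromorphic $f, g$, then force the ratio $h := P(f)/P(g)$ to be constant, and invoke the uniqueness polynomial hypothesis to conclude $f \equiv g$.

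Set $F := P(f)$ and $G := P(g)$. The shared-set hypothesis means that $F$ and $G$ have identical divisors of zeros, so $h := F/G$ is a meromorphic function whose zeros and poles can occur only at poles of $f$ or $g$. In particular,
$$T(r,h) \leq \overline{N}(r,f) + \overline{N}(r,g) + S(r,f) + S(r,g).$$
If $h$ is a constant $c$, then $P(f) \equiv c\, P(g)$, and the hypothesis that $P$ is a uniqueness polynomial immediately yields $f \equiv g$. The substantive task is thus to rule out the possibility that $h$ is non-constant once $n > 2k + 6$ (resp.\ $n > 2k + 2$ in the entire case).

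To that end I would apply Nevanlinna's Second Main Theorem to $f$ at the $k+1$ targets $d_1, \ldots, d_k, \infty$, noting that the $d_i$ are distinct and that condition (\ref{jkms2}) makes the values $P(d_i)$ pairwise distinct. The crucial input is the factorization $P(z) - P(d_i) = (z - d_i)^{q_i + 1} R_i(z)$ with $\deg R_i = n - q_i - 1$: every zero of $f - d_i$ produces a zero of $F - P(d_i)$ of multiplicity at least $q_i + 1$. Writing $F - P(d_i) = G\bigl(h - P(d_i)/G\bigr)$ and using that $F$ and $G$ share their zero divisors, I would bound each $\overline{N}(r, 1/(f - d_i))$ in terms of $T(r,h)$, $\overline{N}(r,g)$, and a small multiple of $T(r,f)$. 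Summing over $i$, using $\sum_{i=1}^{k}(q_i + 1) = n + k - 1$, and invoking $T(r,F) = n\,T(r,f) + O(1)$, I expect an estimate of the form
$$n\,T(r,f) \leq (k+3)\bigl[\overline{N}(r,f) + \overline{N}(r,g)\bigr] + S(r,f) + S(r,g),$$
together with the $f \leftrightarrow g$ symmetric inequality. Adding them and using $\overline{N}(r, \cdot) \leq T(r, \cdot)$ yields $(n - 2k - 6)[T(r,f) + T(r,g)] \leq S(r,f) + S(r,g)$, a contradiction whenever $n > 2k + 6$.

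The main obstacle will be making the coefficient $k + 3$ honest: this requires carefully combining the Second Main Theorem applied to $f$ and $g$ separately with information extracted from the auxiliary function $h$, rather than invoking SMT on $f$ alone. A secondary delicate point is the borderline case $k = 2$, where only two finite critical targets are available and the raw SMT leaves no slack; the extra hypothesis $\min\{q_1, q_2\} \geq 2$ must be exploited precisely to boost the multiplicity $q_i + 1$ in the factorization of $P - P(d_i)$ and recover the same threshold. In the entire-function case $\overline{N}(r,f) = \overline{N}(r,g) = 0$ trims several terms from the bookkeeping and already forces $h$ to be a small function, which explains the improved bound $n > 2k + 2$.
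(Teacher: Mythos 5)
Your overall architecture (force $P(f)\equiv cP(g)$, then invoke the uniqueness-polynomial hypothesis) is the right endgame, but the step you rest everything on is false. The inequality $T(r,h)\leq \overline{N}(r,f)+\overline{N}(r,g)+S(r,f)+S(r,g)$ for $h=P(f)/P(g)$ does not hold: the shared-divisor hypothesis only controls \emph{where} the zeros and poles of $h$ lie (at poles of $g$ and poles of $f$ respectively), i.e.\ it bounds the reduced counting functions $\overline{N}(r,0;h)+\overline{N}(r,\infty;h)$; it says nothing about $N(r,\infty;h)$ with multiplicity (a pole of $f$ of order $p$ that is not a pole of $g$ is a pole of $h$ of order $np$) and nothing whatsoever about $m(r,h)$. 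The entire case makes the failure transparent: there $h$ has no zeros or poles at all, so your bound would force $h$ to be constant for every such polynomial of every degree, making the zero set of any uniqueness polynomial satisfying (\ref{jkms2}) a URSE with no lower bound on $n$ --- contradicting, for instance, the known fact that a URSE must have at least five elements. (Concretely, $f=e^{z}$, $g=e^{-z}$, $P(z)=z^{2}-1$ give $h=-e^{2z}$, of unbounded characteristic.) The difficulty of the whole theorem is precisely that $h$ need not be small, so the Second Main Theorem bookkeeping you sketch afterwards has no foundation. A related omission: you never actually use the hypothesis that $k\geq3$, or $k=2$ with $\min\{q_{1},q_{2}\}\geq2$; in the actual proof this is exactly what kills an additive constant of integration, and an argument that never invokes it should be suspect.

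The route Fujimoto takes (and the one this paper follows for its own Theorems \ref{th1.1}, \ref{thb1.1} and \ref{thbb1.1}) is structurally different: one sets $F=1/P(f)$, $G=1/P(g)$ and studies $H=F''/F'-G''/G'$ rather than the quotient $P(f)/P(g)$. If $H\not\equiv0$, the simple poles of $F$ (equivalently the simple zeros of $P(f)$, which by CM sharing coincide with those of $P(g)$) are zeros of $H$, so $N(r,\infty;F\mid=1)\leq N(r,\infty;H)+S(r)$, and $N(r,\infty;H)$ is in turn estimated by the counting functions of the $f$- and $g$-preimages of the critical points $d_{1},\ldots,d_{k}$, the poles of $f$ and $g$, and the residual zeros of $f'$ and $g'$; feeding this into the Second Main Theorem yields the threshold $n>2k+6$ (resp.\ $n>2k+2$). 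If $H\equiv0$, integration gives $1/P(f)=c_{0}/P(g)+c_{1}$, and it is here --- not in the Nevanlinna estimate --- that condition (\ref{jkms2}) together with the hypothesis on $k$ and the $q_{i}$ forces $c_{1}=0$ (cf.\ Lemma \ref{lemB}); only then does one reach $P(f)\equiv cP(g)$ and apply the uniqueness-polynomial hypothesis. To salvage your plan you would need either to adopt this auxiliary function $H$ or to find a genuinely new way to control $h$; the bound you wrote down cannot be repaired.
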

\begin{rem} We note that the Theorem \ref{1111} gives the best possible  generic unique range set for meromorphic function when $k=2$ (i.e., unique range set with $11$ elements).\par
Also, this type of unique range set (for $k=2$ case) was illustrated in (\cite{BC2}). The unique range set was the zero set of the following polynomial:
\begin{equation}\label{bc}
P(z)=Q(z)+c,
\end{equation}
where
$$Q(z)=\sum\limits_{i=0}^{m} \sum\limits_{j=0}^{n} \binom{m}{i}\binom{n}{j}\frac{(-1)^{i+j}}{n+m+1-i-j}z^{n+m+1-i-j}a^{j}b^{i},$$
 $a\not= b$, $b\not =0$, $c\not\in \{0,-Q(a),-Q(b),-\frac{Q(a)+Q(b)}{2}\}.$ Also, $m$, $n$ are two integers such that  $m+n> 9$, $\max\{m,n\}\geq 3$ and $\min\{m,n\}\geq 2$.
\end{rem}
\begin{rem}
  If we take $a = 0$ and $b = 1$ in the above example, then we get the unique range set for meromorphic functions with $11$ elements described in (\cite{BC}, \cite{BCA}).
\end{rem}
\section{Unique range sets with weight two}
Let $k$ be a non-negative integer or infinity. For $a\in\mathbb{C}\cup\{\infty\}$, we denote by $E_{k}(a;f)$, the set of all $a$-points of $f$, where an $a$-point of multiplicity $m$ is counted $m$ times if $m\leq k$ and $k+1$ times if $m>k$. \par
If for two meromorphic functions $f$ and $g$, we have $E_{k}(a;f)=E_{k}(a;g)$, then we say that $f$ and $g$ share the value $a$ with weight $k$ (\cite{L}). The IM and CM sharing respectively correspond to weight $0$ and $\infty$.\par
\medbreak
For $S\subset \mathbb{C}\cup\{\infty\}$, we define $E_{f}(S,k)$ as $$E_{f}(S,k)=\displaystyle\bigcup_{a\in S}E_{k}(a;f),$$
where $k$ is a non-negative integer or infinity. Clearly $E_{f}(S)=E_{f}(S,\infty)$.\par
Let $l\in \mathbb{N}\cup\{0\}\cup\{\infty\}$. A set $S\subset \mathbb{C} $ is called a $URSM_{l}$ (resp. $URSE_{l}$) if for any two non-constant meromorphic (resp. entire) functions $f$ and $g$, $E_{f}(S,l)=E_{g}(S,l)$ implies $f\equiv g$.\par
\medbreak
A recent development in the uniqueness theory of meromorphic functions is the introduction of the notion of weighted sharing instead of CM sharing. Also, it was observed that the cardinality of most of the existing unique range sets remain same if the sharing environment is relaxed from CM sharing to weighted sharing with weight two. In this direction, in 2012, A. Banerjee and I. Lahiri established the following remarkable result:
\begin{theo}\label{B}(\cite{BL}) Let $P(z)=a_{n}z^{n}+\sum\limits_{j=2}^{m}a_{j}z^{j}+a_{0}$ be a polynomial of degree $n$, where $n-m\geq 3$ and $a_{p}a_{m}\not=0$ for some positive integer $p$ with $2\leq p\leq m$ and $\gcd(p,3)=1$. Suppose further that $S=\{\alpha_{1},\alpha_{2},\ldots,\alpha_{n}\}$ be the set of all distinct zeros of $P(z)$. Let $k$ be the number of distinct zeros of the derivative $P'(z)$. If $n\geq 2k+7~(\text{resp.}~2k+3)$, then the following statements are equivalent:
\begin{enumerate}
\item [i)] $P$ is a \enquote{uniqueness polynomial} for meromorphic (resp. entire) function.
\item [ii)] $S$ is a $URSM_{2}$ (resp. $URSE_{2}$).
\item [iii)] $S$ is a URSM (resp. URSE).
\item [iv)] $P$ is a \enquote{uniqueness polynomial in broad sense} for meromorphic (resp. entire) function.
\end{enumerate}
\end{theo}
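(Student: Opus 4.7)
The plan is to prove the four-fold equivalence via the cyclic chain $(\text{ii}) \Rightarrow (\text{iii}) \Rightarrow (\text{iv}) \Rightarrow (\text{i}) \Rightarrow (\text{ii})$, where the last arrow contains essentially all of the value-distribution content. Two of these arrows are nearly free. For $(\text{ii})\Rightarrow(\text{iii})$, CM sharing is exactly weight-$\infty$ sharing and hence in particular weight-$2$ sharing, so a set forcing uniqueness under the weaker weight-$2$ hypothesis certainly does so under the stronger CM hypothesis. For $(\text{iii})\Rightarrow(\text{iv})$, if $P(f)\equiv P(g)$ then $f$ and $g$ attain each zero of $P$ with matching multiplicities, which is precisely CM sharing of $S$, and hypothesis $(\text{iii})$ delivers $f\equiv g$.

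For $(\text{iv})\Rightarrow(\text{i})$ I would argue algebraically. Suppose $P(f)\equiv cP(g)$ for some nonzero $c$; the goal is to force $c=1$, after which $(\text{iv})$ finishes the job. The polynomial $P(z)=a_{n}z^{n}+\sum_{j=2}^{m}a_{j}z^{j}+a_{0}$ is missing every monomial $z^{n-1},\ldots,z^{m+1}$ as well as $z^{1}$, while the hypothesis $n-m\geq 3$ separates the top and bottom blocks. The existence of some $p\in[2,m]$ with $a_{p}\neq 0$ and $\gcd(p,3)=1$ would be used to obstruct any potentially non-trivial substitution $(f,g)\mapsto(\lambda f,\lambda g)$ that might accommodate $c\neq 1$: matching the coefficients of $P(f)-cP(g)=0$ (or the Laurent expansion at a pole of $f$) and using that $p$ is coprime to $3$ to rule out degenerate cube-root type relations should force $c=1$.

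The main work is $(\text{i})\Rightarrow(\text{ii})$, a Fujimoto-Yi type argument in value distribution. I would set $F=P(f)$ and $G=P(g)$, so that the weight-$2$ hypothesis $E_{f}(S,2)=E_{g}(S,2)$ translates to weight-$2$ sharing of the value $0$ by $F$ and $G$. Forming an auxiliary function $h=F/G$ (or a Frank-Reinders style ratio built from $F,G$ and their first two derivatives), I would apply Nevanlinna's second main theorem to $F$ and $G$ relative to the $k$ distinct critical values $P(d_{1}),\ldots,P(d_{k})$ of $P$, which are pairwise distinct under the standing Fujimoto hypothesis. The weight-$2$ sharing supplies the standard truncation saving on the counting function of shared zeros, and the $k$ critical values contribute a term of order $k$ to the second-main-theorem estimate; balancing these against $T(r,F)=nT(r,f)+O(1)$ and its analogue for $G$, the inequality $n\geq 2k+7$ (respectively $n\geq 2k+3$ in the entire case) is exactly what makes $h$ constant, giving $P(f)\equiv cP(g)$. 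Hypothesis (i) then yields $f\equiv g$.

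The main obstacle is this final Nevanlinna step: the auxiliary function must be chosen so that the shared-zero contributions (with their weight-$2$ savings), the critical-value contributions from $d_{1},\ldots,d_{k}$, and the ramification terms of $f$ and $g$ align to give precisely the threshold $n\geq 2k+7$ rather than a weaker bound. The delicate bookkeeping of the truncated counting functions $\overline{N}(r,1/(f-d_{l}))$ and $\overline{N}(r,1/(g-d_{l}))$ for $1\leq l\leq k$ is where the technical heart of the argument lies; any slack in these estimates would force a lower bound on $n$ strictly larger than the one claimed.
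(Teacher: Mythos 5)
Your overall architecture --- the cycle (ii) $\Rightarrow$ (iii) $\Rightarrow$ (iv) $\Rightarrow$ (i) $\Rightarrow$ (ii) with the first two arrows formal --- is the right skeleton, and you correctly locate the Nevanlinna content in (i) $\Rightarrow$ (ii). (For context: the paper does not prove Theorem \ref{B} at all; it quotes it from \cite{BL}, and its own Theorem \ref{th1.1} is precisely a variant of the (ii) $\Leftrightarrow$ (iii) part engineered to drop the hypothesis $n-m\geq 3$.) The difficulty is that your proposal has two genuine gaps, and both sit exactly where the special hypotheses on $P$ --- namely $n-m\geq 3$, $a_{p}a_{m}\neq 0$, $\gcd(p,3)=1$ --- must be used; your argument never invokes them in any concrete way. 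For (iv) $\Rightarrow$ (i), \enquote{matching the coefficients of $P(f)-cP(g)=0$} is not a meaningful operation when $f$ and $g$ are transcendental meromorphic functions: the powers $1,f,f^{2},\ldots$ admit no coefficient comparison, and a Laurent expansion at one pole gives only local data. The actual proof that $P(f)\equiv cP(g)$ forces $c=1$ is a Borel/Nevanlinna-type argument exploiting the gap structure of $P$ (the absent monomials $z^{m+1},\ldots,z^{n-1}$ and $z^{1}$) together with $\gcd(p,3)=1$; this is the substance of Lemma \ref{A} and its surrounding machinery, none of which you reconstruct.

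The second gap is in your main arrow (i) $\Rightarrow$ (ii). The Fujimoto-type argument is run with $F=1/P(f)$, $G=1/P(g)$ and $H=F''/F'-G''/G'$ (not with the ratio $P(f)/P(g)$, which the weight-$2$ hypothesis does not control), and after the second main theorem disposes of the case $H\not\equiv 0$ under $n\geq 2k+7$, what integration of $H\equiv 0$ delivers is $1/P(f)\equiv c_{0}/P(g)+c_{1}$ with an undetermined additive constant $c_{1}$. To reach the multiplicative relation $P(f)\equiv cP(g)$ that hypothesis (i) consumes, you must prove $c_{1}=0$, and that is again exactly Lemma \ref{A}, i.e., exactly where $n-m\geq 3$ and the coefficient conditions enter. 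Your proposal silently assumes this. Note that the paper's own proof of Theorem \ref{th1.1} sidesteps $c_{1}=0$ by observing that even when $c_{1}\neq 0$ the relation forces every $\alpha_{i}$-point of $f$ of multiplicity $m$ to be an $\alpha_{j}$-point of $g$ of multiplicity $m$, hence $E_{f}(S)=E_{g}(S)$, and then invoking the URSM property; but that trick consumes (iii) as a hypothesis, so it proves (iii) $\Rightarrow$ (ii) and cannot be transplanted into your arrow (i) $\Rightarrow$ (ii) without reorganizing the cycle.
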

To prove the Theorem \ref{B}, the authors used the following lemma:
\begin{lem}\label{A}(\cite{BL}, Lemma 2.1) Let $P(z)=a_{n}z^{n}+\sum\limits_{j=2}^{m}a_{j}z^{j}+a_{0}$ be a polynomial of degree $n$, where $n-m\geq 3$ and $a_{p}a_{m}\not=0$ for some positive integer $p$ with $2\leq p\leq m$ and $\gcd(p,3)=1$. Suppose that $$\frac{1}{P(f)}=\frac{c_0}{P(g)}+c_{1},$$
where $f$ and $g$ are non-constant meromorphic functions and $c_0(\not=0)$, $c_{1}$ are constants. If $n\geq 6$, then $c_1=0$.
\end{lem}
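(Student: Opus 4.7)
Suppose for contradiction that $c_1\neq 0$. Rewriting the hypothesis,
\[
P(g)=\frac{c_{0}\,P(f)}{1-c_{1}P(f)},
\]
which exhibits $P(g)$ as a Möbius transformation of $P(f)$. By Mokhon'ko's lemma $T(r,P(f))=nT(r,f)+O(1)$, and similarly for $g$, so $T(r,f)=T(r,g)+O(1)$. The functional relation yields three set-theoretic correspondences:
\begin{enumerate}
\item[(a)] The zero set of $P(f)$ coincides with that of $P(g)$;
\item[(b)] The distinct poles of $g$ are exactly the distinct zeros of $1-c_{1}P(f)$, i.e., the points where $f\in P^{-1}(1/c_{1})$;
\item[(c)] The distinct poles of $f$ are exactly the distinct zeros of $c_{0}+c_{1}P(g)$, i.e., the points where $g\in P^{-1}(-c_{0}/c_{1})$.
\end{enumerate}

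The plan is then to apply Nevanlinna's second main theorem to $P(f)$ at the three values $0,\,1/c_{1},\,\infty$, obtaining
\[
T(r,P(f))\leq \overline{N}(r,P(f))+\overline{N}(r,1/P(f))+\overline{N}\!\left(r,\tfrac{1}{P(f)-1/c_{1}}\right)+S(r,f),
\]
and translating each counting term via (a)-(c) into information purely about $f$ and $g$. The key structural input comes from $a_{1}=0$: the origin is a critical point of $P$, and $P(z)-a_{0}$ has a zero at $0$ of exact order $p_{0}:=\min\{j\geq 2:a_{j}\neq 0\}$, with $2\leq p_{0}\leq p$. Consequently every zero of $f$ forces $P(f)-a_{0}$ to vanish with multiplicity $\geq p_{0}$, so the ramification term in the second main theorem for $f$ over the value $a_{0}$ can be strengthened; the same enhancement is available for $g$. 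Playing the sharpened second main theorem for $f$ at the zeros of $P$ against (b)-(c), and combining with the symmetric estimate for $g$, should produce a linear inequality of the form $n\,T(r,f)\leq C\cdot T(r,f)+S(r,f)$ for an explicit constant $C=C(p,m)$ independent of $n$.

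The main obstacle is the careful bookkeeping of multiplicities when transporting counting functions between $f$ and $g$ via (a)-(c), especially at points where $P$, $P-1/c_{1}$, or $P+c_{0}/c_{1}$ has multiple roots, since multiplicities on one side force divisibility constraints on the other. The gap hypothesis $n-m\geq 3$ is what makes the ramification accounting close, while the arithmetic condition $\gcd(p,3)=1$ is used to exclude a degenerate branch in which a cubic symmetry of the functional equation (i.e., a factorisation of the form $P(z)=R(z^{3})$-type obstruction mediated by the $a_{p}z^{p}$ term) would otherwise allow $c_{1}\neq 0$. Once $C$ is determined, the assumption $n\geq 6$ forces the contradiction $n\leq C<n$, yielding $c_{1}=0$.
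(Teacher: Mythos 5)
The paper does not actually prove this lemma: it is quoted verbatim from Banerjee and Lahiri (\cite{BL}, Lemma 2.1), and the surrounding discussion is precisely about how to dispense with the hypothesis $n-m\geq 3$ that the lemma requires. So there is no in-paper proof to compare against; I can only assess your attempt on its own terms, and as it stands it is an outline with the decisive steps missing.

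Your reductions (a)--(c) from the M\"{o}bius relation $P(g)=c_{0}P(f)/(1-c_{1}P(f))$ are correct, and identifying the poles of $g$ with the $1/c_{1}$-points of $P(f)$ and the poles of $f$ with the $(-c_{0}/c_{1})$-points of $P(g)$ is the right starting point. Everything after that, however, is a plan rather than an argument. The proposed application of the second main theorem to $P(f)$ at $0$, $1/c_{1}$, $\infty$ is, as written, vacuous: $T(r,P(f))=nT(r,f)+O(1)$, while the single term $\overline{N}(r,0;P(f))$ can already be of size $nT(r,f)$ (the $n$ roots of $P$ may all be simple and assumed simply by $f$), so the inequality yields nothing unless one gains a definite factor from forced multiplicities. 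You gesture at such a gain via the ramification of $P$ over $a_{0}$ (correctly noting that $P(z)-a_{0}=z^{p_{0}}(\cdots)$ with $p_{0}\geq 2$), but $a_{0}$ is not among the three values to which you apply the theorem, and you never say how that ramification interacts with (b)--(c); the mechanism that actually makes such arguments close is that the correspondences force every zero of $P(f)-1/c_{1}$ and of $P(g)+c_{0}/c_{1}$ to carry multiplicities dictated by the root structure of $P(z)-c$, which is where $n-m\geq 3$ and the coefficient conditions enter. The constant $C$ in your target inequality $n\,T(r,f)\leq C\,T(r,f)+S(r,f)$ is never computed, so the hypothesis $n\geq 6$ is never used; and your explanation of the hypothesis $\gcd(p,3)=1$ (a ``cubic symmetry'' obstruction) is a conjecture about what might go wrong rather than an identification and exclusion of the degenerate case. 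In short, the set-up is sound, but the quantitative core of the proof --- the multiplicity bookkeeping and the explicit bound that contradicts $n\geq 6$ --- is absent, so the statement remains unproved.
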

It is noted that in Lemma \ref{A}, the condition $n-m\geq 3$ is essential. But, we observed that the \textbf{the condition \enquote{$n-m\geq 3$} is not necessary} in order to show  the equivalence between the statements (ii) and (iii) in \emph{Theorem \ref{B}}.
\medbreak
The motivation of this paper is to provide equivalency between a unique range set with counting multiplicity and unique range set with weight $2$, for
meromorphic functions. For this, we need to introduce the deficiency functions (\cite{YY}).\par
Let $a \in \mathbb{C}\cup \{\infty\}$, we set
$$\delta(a;f)=1-\limsup\limits_{r\rightarrow\infty}\frac{N(r,a;f)}{T(r,f)},$$
$$\Theta(a;f)=1-\limsup\limits_{r\rightarrow\infty}\frac{\ol{N}(r,a;f)}{T(r,f)}.$$
Now, we state the main result of this paper.
\begin{theo}\label{th1.1}
Let $P(z)$ be a polynomial of degree $n$ such that $P(z)=a_{0}(z-\alpha_{1})(z-\alpha_{2})\ldots(z-\alpha_{n})$; where $\alpha_{i}\not=\alpha_{j}$, $1\leq i,j\leq n$. Further suppose that $S=\{\alpha_{1},\alpha_{2},\ldots,\alpha_{n}\}$ be the set of all distinct zeros of $P(z)$. Let $k$ be the number of distinct zeros of the derivative $P'(z)$. Let $f$ and $g$ be two non-constant meromorphic functions such that $$\Theta(\infty;f)+\Theta(\infty;g)+\frac{1}{2}\min\{\delta(0,f),\delta(0,g)\}>\frac{2k+6-n}{2}.$$
Then the following two statements are equivalent:
\begin{enumerate}
\item [a)] If $E_{f}(S,2)=E_{g}(S,2)$, then $f\equiv g$.
\item [b)] If $E_{f}(S)=E_{g}(S)$, then $f\equiv g$.
\end{enumerate}
\end{theo}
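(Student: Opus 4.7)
The direction (a) $\Rightarrow$ (b) is immediate, because $E_{f}(S)=E_{g}(S)$ trivially entails $E_{f}(S,2)=E_{g}(S,2)$, whereupon (a) yields $f\equiv g$. The substantial content is therefore (b) $\Rightarrow$ (a), and my plan is to show that under the weight-$2$ sharing together with the deficiency hypothesis one already has $P(f)\equiv c P(g)$ for some constant $c\neq 0$; this forces $E_{f}(S)=E_{g}(S)$, and (b) then concludes $f\equiv g$.

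Put $F=P(f)$ and $G=P(g)$; by hypothesis $F$ and $G$ share $0$ with weight $2$. I would introduce the standard Banerjee--Lahiri auxiliary function
\[
H=\left(\frac{F''}{F'}-\frac{2F'}{F}\right)-\left(\frac{G''}{G'}-\frac{2G'}{G}\right),
\]
whose possible poles are simple and confined to (i) common zeros of $F$ and $G$ of multiplicity $\geq 3$, (ii) zeros of $F'$ or $G'$ arising from the $k$ critical points $d_{1},\ldots,d_{k}$ of $P$, and (iii) poles of $f$ and $g$. Assume first that $H\not\equiv 0$. The lemma on the logarithmic derivative gives $m(r,H)=S(r,f)+S(r,g)$; bounding $N(r,H)$ by the above inventory and feeding the result into Nevanlinna's second fundamental theorem applied to $f$ and to $g$ against the $n+1$ targets $\alpha_{1},\ldots,\alpha_{n},\infty$ should produce a Nevanlinna inequality which, after division by $T(r,f)+T(r,g)$ and passage to $\limsup$, directly violates
\[
\Theta(\infty;f)+\Theta(\infty;g)+\tfrac{1}{2}\min\{\delta(0,f),\delta(0,g)\}>\tfrac{2k+6-n}{2}.
\]
Thus $H\equiv 0$.

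Integrating $H\equiv 0$ twice yields constants $A\neq 0$ and $B$ with $\tfrac{1}{F}=\tfrac{A}{G}+B$. If $B\neq 0$, then $F=G/(A+BG)$, so every $n$-fold pole of $f$ must lie at a zero of $A+BP(g)$; a fresh application of the second fundamental theorem to $g$ at the $n$ finite zeros of $A+BP(z)$ and at $\infty$, combined again with the deficiency hypothesis on $\Theta(\infty;g)$ and $\delta(0,g)$, rules this subcase out. Hence $B=0$ and $P(f)\equiv \tfrac{1}{A}P(g)$, which gives $E_{f}(S)=E_{g}(S)$, and (b) closes the argument. The principal obstacle I expect is pinning down the pole-counting for $H$ tightly enough that the $k$ critical points of $P$ contribute only the $2k$ term one needs, so that the right-hand side of the key inequality matches exactly the threshold $2k+6-n$ in the deficiency hypothesis. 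The whole point is that this accounting is purely value-distribution-theoretic and no longer requires the algebraic condition $n-m\geq 3$ used in Lemma~\ref{A}; it is precisely this algebraic condition that the defect hypothesis is trading away.
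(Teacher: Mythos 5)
Your direction (a)$\Rightarrow$(b) and your Case I are essentially sound: your auxiliary function $\left(\frac{F''}{F'}-\frac{2F'}{F}\right)-\left(\frac{G''}{G'}-\frac{2G'}{G}\right)$ with $F=P(f)$, $G=P(g)$ is literally the same function as the paper's $H=\frac{\widetilde F''}{\widetilde F'}-\frac{\widetilde G''}{\widetilde G'}$ formed with $\widetilde F=1/P(f)$, $\widetilde G=1/P(g)$, and your pole inventory fed into the second fundamental theorem yields the contradiction with the deficiency hypothesis just as the paper's computation does.

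The gap is in your treatment of $H\equiv 0$. Having obtained $\frac{1}{P(f)}=\frac{A}{P(g)}+B$, you try to force $B=0$ by applying the second fundamental theorem to $g$ at the zeros of $A+BP(z)$. This cannot work under the stated hypotheses: that argument only shows that $P(z)+A/B$ has at most three distinct roots, and nothing in the theorem --- which assumes only that $P$ has simple zeros, with no Fujimoto-type separation condition $P(\beta_{l_s})\neq P(\beta_{l_t})$ and no condition $n-m\geq 3$ --- prevents $P(z)+A/B$ from having very few distinct roots (for instance $P(z)=z^n-1$ has simple zeros while $P(z)+1=z^n$ has a single root). This is precisely why Lemma \ref{A} and Fujimoto's corresponding lemma carry extra algebraic hypotheses. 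More importantly, the step is unnecessary, and you have mislocated where the saving occurs: the deficiency hypothesis is spent entirely in Case I, not in eliminating $B$. The paper's key observation is that the identity $\frac{1}{P(f)}=\frac{A}{P(g)}+B$ with \emph{arbitrary} $B$ already implies $E_f(S)=E_g(S)$: a zero of $P(f)$ of order $m$ is a pole of order $m$ of the left-hand side, hence of $\frac{A}{P(g)}+B$, hence of $\frac{1}{P(g)}$, hence a zero of $P(g)$ of order $m$, and conversely. Statement (b) then gives $f\equiv g$ at once. Replacing your $B\neq 0$ subcase by this one-line observation closes your proof.
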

\medbreak
The following corollary is the immediate consequence of Theorem \ref{th1.1}.
\begin{cor}\label{th1.2}
Let $P(z)$ be a polynomial of degree $n$ such that $P(z)=a_{0}(z-\alpha_{1})(z-\alpha_{2})\ldots(z-\alpha_{n})$; where $\alpha_{i}\not=\alpha_{j}$, $1\leq i,j\leq n$. Further suppose that $S=\{\alpha_{1},\alpha_{2},\ldots,\alpha_{n}\}$ be the set of all distinct zeros of $P(z)$. Let $k$ be the number of distinct zeros of the derivative $P'(z)$.  If $n\geq 2k+7~(resp.~2k+3)$, then the following two statements are equivalent:
\begin{enumerate}
\item [a)] $S$ is a $URSM_{2}$ (resp. $URSE_{2}$).
\item [b)] $S$ is a URSM (resp. URSE).
\end{enumerate}
\end{cor}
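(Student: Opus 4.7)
The plan is to deduce the corollary directly from Theorem~\ref{th1.1} by observing that, under the numerical hypotheses $n\geq 2k+7$ (meromorphic) or $n\geq 2k+3$ (entire), the deficiency condition in Theorem~\ref{th1.1} holds vacuously for \emph{every} pair of non-constant meromorphic (resp.\ entire) functions. Once that is established, the equivalence for arbitrary $f,g$ asserted in Theorem~\ref{th1.1} upgrades to an equivalence of universal statements about $S$, which is exactly the content of Corollary~\ref{th1.2}.

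The direction $(a)\Rightarrow (b)$ is the easy one and needs no appeal to Theorem~\ref{th1.1}: if $f,g$ are non-constant meromorphic (resp.\ entire) functions with $E_f(S)=E_g(S)$, then by the very definition of weighted set sharing (weight $\infty$ is stronger than weight $2$) we also have $E_f(S,2)=E_g(S,2)$, so the $URSM_2$ (resp.\ $URSE_2$) hypothesis forces $f\equiv g$. For the converse $(b)\Rightarrow (a)$ I would take arbitrary non-constant $f,g$ with $E_f(S,2)=E_g(S,2)$ and verify the deficiency hypothesis of Theorem~\ref{th1.1} from the numerical bound. In the meromorphic case, $\Theta(\infty;f)+\Theta(\infty;g)+\tfrac{1}{2}\min\{\delta(0,f),\delta(0,g)\}\geq 0$, while $\tfrac{2k+6-n}{2}\leq -\tfrac{1}{2}<0$ whenever $n\geq 2k+7$. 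In the entire case, $f$ and $g$ have no poles, so $\Theta(\infty;f)=\Theta(\infty;g)=1$, giving a left-hand side at least $2$, whereas $\tfrac{2k+6-n}{2}\leq \tfrac{3}{2}$ whenever $n\geq 2k+3$. In either case the hypothesis of Theorem~\ref{th1.1} is met.

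Therefore Theorem~\ref{th1.1} applies to the chosen pair $(f,g)$ and yields the equivalence between its statements (a) and (b). Since $S$ is URSM (resp.\ URSE) by assumption, statement (b) of Theorem~\ref{th1.1} holds for this pair, hence so does (a), which means $E_f(S,2)=E_g(S,2)$ implies $f\equiv g$. As $(f,g)$ was arbitrary, $S$ is $URSM_2$ (resp.\ $URSE_2$), completing the proof. There is no substantive obstacle here; the only point worth care is to keep straight that the easy direction of the corollary uses only the monotonicity $E_f(S)=E_g(S)\Rightarrow E_f(S,2)=E_g(S,2)$, whereas the nontrivial direction routes through Theorem~\ref{th1.1} after the trivial sign verification of the deficiency inequality.
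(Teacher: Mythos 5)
Your proposal is correct and follows exactly the route the paper intends: the paper states the corollary as an immediate consequence of Theorem \ref{th1.1}, and your verification that the deficiency hypothesis holds automatically (left side $\geq 0$ versus $\frac{2k+6-n}{2}\leq-\frac{1}{2}$ in the meromorphic case, and $\geq 2$ versus $\leq\frac{3}{2}$ in the entire case where $\Theta(\infty;f)=\Theta(\infty;g)=1$) is precisely the omitted computation. The quantifier bookkeeping and the easy direction via monotonicity of weighted sharing are also handled correctly.
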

\medbreak
\begin{rem}
The polynomial described by the equation (\ref{yi})  satisfies the assumptions of the Corollary \ref{th1.2}. Here $k=m+1$, where $m\geq 2 ~(resp.~~1)$ is an integer for URSM (resp. URSE). Since the zero set of the polynomial gives URSM (resp. URSE) with $13$ (resp. $7$) elements, thus the zero set of the polynomial gives $URSM_{2}$ (resp. $URSE_{2}$) with $13$ (resp. $7$) elements.
\end{rem}
\begin{rem}
The polynomials described by the equations (\ref{fr}) and (\ref{bc})  satisfies the assumptions of the Corollary \ref{th1.2}. Here $k=2$. Since the zero set of the respective polynomials give URSM (resp. URSE) with $11$ (resp. $7$) elements, thus the zero set of the respective polynomials give $URSM_{2}$ (resp. $URSE_{2}$) with $11$ (resp. $7$) elements.
\end{rem}
\begin{rem}
The polynomial described by the equation (\ref{bcj})  satisfies the assumptions of the Corollary \ref{th1.2}. Here $k=m+1$, where $m\geq 1 (resp.~~1)$ is an integer. Since, for $m=1$, the zero set of the polynomial gives URSM (resp. URSE) with $11$ (resp. $7$) elements, thus the zero set of the polynomial gives $URSM_{2}$ (resp. $URSE_{2}$) with $11$ (resp. $7$) elements.
\end{rem}
\medbreak
Now we explain some definitions and notations which are used to prove the Theorem \ref{th1.1}.
\begin{defi} \label{d3}\cite{YY} Let $a\in\mathbb{C}\cup\{\infty\}$ and $m\in \mathbb{N}$.
\begin{enumerate}
\item [i)] We denote by $N(r,a;f\mid=1)$ the counting function of simple $a$-points of $f$.
\item [ii)] We denote by $N(r,a;f\mid\leq m)$ (resp. $N(r,a;f\mid\geq m)$ by the counting function of those $a$-points of $f$ whose multiplicities are not greater(resp. less) than $m$ where each $a$-point is counted according to its multiplicity.
\end{enumerate}
 Similarly, $\ol N(r,a;f \mid\leq m)$ and $\ol N(r,a;f \mid\geq m)$ are the reduced counting function of $N(r,a;f\mid\leq m)$ and $N(r,a;f\mid\geq m)$ respectively.
\end{defi}
\begin{defi}\label{d5}\cite{YY} Let $f$ and $g$ be two non-constant meromorphic functions such that $f$ and $g$ share $a$ IM. Let $z_{0}$ be an $a$-point of $f$ with multiplicity $p$, an $a$-point of $g$ with multiplicity $q$.
\begin{enumerate}
  \item [i)] We denote by $\ol N_{L}(r,a;f)$ the reduced counting function of those $a$-points of $f$ and $g$ where $p>q$.
  \item [ii)] We denote by $N^{1)}_{E}(r,a;f)$ the counting function of those $a$-points of $f$ and $g$ where $p=q=1$.
  \item [iii)] We denote by $\ol N^{(2}_{E}(r,a;f)$ the reduced counting function of those $a$-points of $f$ and $g$ where $p=q\geq 2$.
\end{enumerate}
In the same way we can define $\ol N_{L}(r,a;g)$, $N^{1)}_{E}(r,a;g)$, $\ol N^{(2}_{E}(r,a;g)$. When $f$ and $g$ share $a$ with weight $m$, $m\geq 1$ then $$N^{1)}_{E}(r,a;f)=N(r,a;f\mid=1).$$
\end{defi}
\begin{defi} \label{d7}\cite{YY} Let $f$, $g$ share a value $a$ IM. We denote by $\ol N_{*}(r,a;f,g)$ the reduced counting function of those $a$-points of $f$ whose multiplicities differ from the multiplicities of the corresponding $a$-points of $g$. Clearly $$\ol N_{*}(r,a;f,g) \equiv \ol N_{*}(r,a;g,f)~~\text{and}~~\ol N_{*}(r,a;f,g)=\ol N_{L}(r,a;f)+\ol N_{L}(r,a;g).$$
\end{defi}
\begin{proof}[\textbf{Proof of the Theorem \ref{th1.1} }]
 The case \textbf{$(a)\Rightarrow (b)$} is obvious. So, we only prove the case \textbf{$(b)\Rightarrow (a)$}.\par
 Let $f$ and $g$ be two non-constant meromorphic functions  share the set $$S=\{\alpha_{1},\alpha_{2},\ldots,\alpha_{n}\}$$ with weight $2$ and $$\Theta(\infty;f)+\Theta(\infty;g)+\frac{1}{2}\min\{\delta(0,f),\delta(0,g)\}>\frac{2k+6-n}{2}.$$ In this case, our claim is to show that $f\equiv g$. For that, we put $$P(z)=a_{0}(z-\alpha_{1})(z-\alpha_{2})\ldots(z-\alpha_{n}),$$
and
$$F(z):=\frac{1}{P(f(z))}~~\text{and}~~G(z):=\frac{1}{P(g(z))}.$$
Let $S(r)$ be any function $S(r):(0,\infty)\rightarrow\mathbb{R}$ satisfying $S(r)=o(T(r,F)+T(r,G))$ for $r\rightarrow\infty$ outside a set of finite Lebesgue Measure.\par
Let
$$H(z):=\frac{F''(z)}{F'(z)}-\frac{G''(z)}{G'(z)},$$
and this function $H$ was introduced by H. Fujimoto(\cite{F1}).\par
Now we consider two cases:\par
\textbf{Case-I} First we assume that $H\not\equiv 0$. It is given that
$$ 2k+6-2\Theta(\infty;f)-2\Theta(\infty;g)-\min\{\delta(0,f),\delta(0,g)\}+\epsilon<n,$$
where $\epsilon$ is a small positive number.\par
Since $H(z)$ can be expressed as
$$H(z)=\frac{G'(z)}{F'(z)}\left(\frac{F'(z)}{G'(z)}\right)',$$
so all poles of $H$ are simple. Also, \textbf{poles of $H$ may occur} at
\begin{enumerate}
 \item poles of $F$ and $G$.
  \item zeros of $F'$ and $G'$,
\end{enumerate}
Now, by simple calculations, one can show that \enquote{simple poles} of $F$ are the zeros of $H$. Thus
\bea \label{equn1.1} N(r,\infty;F|=1)=N(r,\infty;G|=1)\leq N(r,0;H). \eea
Now, using the lemma of logarithmic derivative and the first fundamental theorem, (\ref{equn1.1}) can be written as
\bea \label{equn1.2} N(r,\infty;F|=1)=N(r,\infty;G|=1)\leq N(r,\infty;H)+S(r). \eea
Let $\beta_{1},\beta_{2},\ldots,\beta_{k}$ be the $k$- distinct zeros of $P'(z)$. Since $F'(z)=-\frac{f'(z)P'(f(z))}{(P(f(z)))^{2}}$, $G'(z)=-\frac{g'(z)P'(g(z))}{(P(g(z)))^{2}}$ and $f$, $g$ share $S$ with weight $2$, by simple calculations, we can write
\bea \label{equn1.3} N(r,\infty;H)&\leq& \sum_{j=1}^{k}\left(\overline{N}(r,\beta_{j};f)+\overline{N}(r,\beta_{j};g)\right)+\ol{N}_{0}(r,0;f')+\ol{N}_{0}(r,0;g') \\
\nonumber &&+ \ol{N}(r,\infty;f)+\ol{N}(r,\infty;g)+\ol{N}_{\ast}(r,\infty;F,G),\eea
where $\ol{N}_{0}(r,0;f')$ denotes the reduced counting function of zeros of $f'$, which are not zeros of $\prod_{i=1}^{n}(f-\alpha_{i})\prod_{j=1}^{k}(f-\beta_{j})$, similarly, $\ol{N}_{0}(r,0;g')$ is defined.\par
Also, using Lemma 3 of (\cite{Y90}), we get
\bea\label{2.0} &&\ol{N}(r,\infty;F|\geq2)+\ol{N}_{0}(r,0,g')+\ol{N}_{\ast}(r,\infty;F,G)\\
\nonumber &\leq&\ol{N}(r,0;P(g)|\geq2)+\ol{N}_{0}(r,0,g')+\ol{N}(r,0;P(g)|\geq3)\\
\nonumber &\leq& N(r,0;g')\\
\nonumber &\leq& N(r,0;g)+\overline{N}(r,\infty;g)+S(r,g).
\eea
Put $T(r)=\max\{T(r,f),T(r,g)\}$ and $\delta(0)=\min\{\delta(0,f),\delta(0,g)\}$. Now, for any $\varepsilon (>0)$, using the second fundamental theorem and (\ref{equn1.2}), (\ref{equn1.3}) and (\ref{2.0}), we have
\bea \label{equn2.1} && (n+k-1)T(r,f)\\
\nonumber &\leq& \ol{N}(r,\infty;f)+\ol{N}(r,0;P(f))+\sum_{j=1}^{k}\overline{N}(r,\beta_{j};f)-\ol{N}_{0}(r,0;f')+S(r)\\
\nonumber &\leq& 2\ol{N}(r,\infty;f)+\ol{N}(r,\infty;g)+\sum_{j=1}^{k}\left(2\overline{N}(r,\beta_{j};f)+\overline{N}(r,\beta_{j};g)\right)\\
\nonumber &+&\ol{N}(r,\infty;F|\geq2)+\ol{N}_{0}(r,0;g')+\ol{N}_{\ast}(r,\infty;F,G)+S(r)\\
\nonumber &\leq& 2\ol{N}(r,\infty;f)+2\ol{N}(r,\infty;g)+2kT(r,f)+kT(r,g)+N(r,0;g)+S(r)\\
\nonumber &\leq& \left(3k+5-2\Theta(\infty;f)-2\Theta(\infty;g)-\delta(0)+\varepsilon\right)T(r)+S(r)\eea
Similarly,
\bea \label{equn2.2} && (n+k-1)T(r,g)\\
\nonumber &\leq& \left(3k+5-2\Theta(\infty;g)-2\Theta(\infty;f)-\delta(0)+\varepsilon\right)T(r)+S(r).
\eea
Thus comparing (\ref{equn2.1}) and (\ref{equn2.2}), we have
\bea \label{equn2.3} && (n+k-1)T(r)\\
\nonumber &\leq& \left(3k+5-2\Theta(\infty;g)-2\Theta(\infty;f)-\delta(0)+\varepsilon\right)T(r)+S(r),
\eea
which contradicts the assumption that $\Theta(\infty;f)+\Theta(\infty;g)+\frac{1}{2}\min\{\delta(0,f),\delta(0,g)\}>\frac{2k+6-n}{2}$. Hence $H\equiv 0$.\par
\textbf{Case-II}  Next we assume that $H\equiv 0$. Then by integration, we have
\beas \frac{1}{P(f(z))}&\equiv&\frac{c_0}{P(g(z))}+c_{1},\\
\text{i.e.,~~} \frac{1}{a_{0}(f-\alpha_{1})(f-\alpha_{2})\ldots(f-\alpha_{n})}&\equiv&\frac{c_0}{a_{0}(g-\alpha_{1})(g-\alpha_{2})\ldots(g-\alpha_{n})}+c_{1},\eeas
where $c_{0}$ is a non-zero complex constant. If $z_{0}$ be an $\alpha_{i}$ point of $f$ of multiplicity $m$, then it is a pole of $\frac{1}{P(f(z))}$ of order $m$, hence it is a pole of $\frac{1}{P(g(z))}$ of order $m$, i.e., $z_{0}$ is an $\alpha_{j}$ point of $g$ of order $m$ for some $j\in\{1,2,\ldots,n\}$.\par
Thus $f$ and $g$ share the set $S=\{\alpha_{1},\alpha_{2},\ldots,\alpha_{n}\}$ in counting multiplicity. Since $S$ is an URSM, so $f\equiv g$. This completes the proof.
\end{proof}
\section{Unique range sets with weak weight three}
Let $k$ be a non-negative integer or infinity. For $a\in\mathbb{C}\cup\{\infty\}$, we denote by $E_{k)}(a;f)$, the set of all $a$-points of $f$, whose multiplicities are not greater than $k$ and each such $a$-points are counted according to its multiplicity.\par
If for two meromorphic functions $f$ and $g$, we have $E_{k)}(a;f)=E_{k)}(a;g)$, then we say that $f$ and $g$ share the value $a$ with \enquote{weak-weight} $k$.\par
\medbreak
Let $S\subset \mathbb{C}\cup\{\infty\}$. We put $$E_{k)}(S, f)=\displaystyle\bigcup_{a\in S}E_{k)}(a;f),$$
where $k$ is a non-negative integer or infinity.\par
Let $l\in \mathbb{N}\cup\{0\}\cup\{\infty\}$. A set $S\subset \mathbb{C} $ is called a $URSM_{l)}$ (resp. $URSE_{l)}$) if for any two non-constant meromorphic (resp. entire) functions $f$ and $g$, $E_{l)}(S,f)=E_{l)}(S,g)$ implies $f\equiv g$.\par
In 2009, X. Bai, Q. Han and A. Chen (\cite{Bai}) proved the following \enquote{weak-weighted} sharing version of Fujimoto's Theorem:
\begin{theo}(\cite{Bai})
In addition to the hypothesis of Theorem \ref{1111}, further we suppose that $l\geq 3$  is a positive integer or $\infty$.\par
If $S$ is the set of zeros of $P(z)$ and $n>2k+6$ (resp. $n>2k+2)$, then $S$ is a $URSM_{l)}$ (resp. $URSE_{l)})$.
\end{theo}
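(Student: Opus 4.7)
The plan is to parallel the proof of Theorem \ref{th1.1}, introducing the same Fujimoto auxiliary function $H = F''/F' - G''/G'$ with $F := 1/P(f)$ and $G := 1/P(g)$, and dichotomizing on $H \equiv 0$ versus $H \not\equiv 0$. The structural observation that drives everything is: the weak-weighted sharing $E_{l)}(S,f) = E_{l)}(S,g)$ with $l \geq 3$ forces every $\alpha_i$-point of $f$ of multiplicity $m \leq l$ to coincide, with the identical multiplicity $m$, with an $\alpha_j$-point of $g$; only $\alpha_i$-points of multiplicity $\geq l+1$ may be ``mismatched,'' and because such points are necessarily multiple, the reduced counting function of the mismatched set is bounded by roughly $\tfrac{1}{l+1}$ times the full counting function.

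In Case I ($H \not\equiv 0$), the same local calculation as in Theorem \ref{th1.1} shows that simple poles of $F$ are zeros of $H$; since $l \geq 3 \geq 1$, simple poles of $F$ and $G$ coincide, whence
$$N(r,\infty;F|=1) = N(r,\infty;G|=1) \;\leq\; N(r,0;H) \;\leq\; N(r,\infty;H) + S(r).$$
One then bounds $N(r,\infty;H)$ by the $\beta_j$-point contributions $\sum_{j=1}^{k}\bigl(\ol{N}(r,\beta_{j};f) + \ol{N}(r,\beta_{j};g)\bigr)$, the standard auxiliary terms $\ol{N}_{0}(r,0;f') + \ol{N}_{0}(r,0;g')$, and a weak-weighted discrepancy term $\ol{N}_{\ast}(r,\infty;F,G)$ that, by the observation above, is at most $\tfrac{1}{l+1}\bigl[\ol{N}(r,0;P(f)) + \ol{N}(r,0;P(g))\bigr] + S(r)$. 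Applying the Second Fundamental Theorem to $f$ with the $n+k+1$ target values $\{\alpha_i\} \cup \{\beta_j\} \cup \{\infty\}$, performing the analogous bookkeeping for $g$, and combining, one arrives at an inequality of the form
$$(n+k-1)\,T(r) \;\leq\; \Bigl(3k + 5 + \tfrac{C}{l+1}\Bigr)\,T(r) + S(r),$$
which under $n > 2k+6$ is contradictory for meromorphic functions (and under $n > 2k+2$ in the entire case, where the $\ol{N}(r,\infty;\cdot)$ contributions vanish). Hence $H \equiv 0$.

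In Case II ($H \equiv 0$), integrating twice produces $1/P(f) \equiv c_0/P(g) + c_1$ for constants $c_0 \neq 0$ and $c_1$. If $c_1 \neq 0$, then every pole of $g$ would force $P(f) = 1/c_1$ at that point, producing a rigid polynomial relation that, combined with the weak-weighted sharing and standard Nevanlinna comparison, is incompatible with $n > 2k+6$ (resp.\ $n > 2k+2$). Hence $c_1 = 0$, which gives $P(f) \equiv c_0^{-1} P(g)$; since $P$ is a uniqueness polynomial by the hypothesis inherited from Theorem \ref{1111}, we conclude $f \equiv g$.

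The main obstacle is the bookkeeping in Case I: unlike Lahiri's weighted sharing, weak-weighted sharing imposes \emph{no} correspondence whatsoever at multiplicities $> l$, so the discrepancy count $\ol{N}_{\ast}(r,\infty;F,G)$ is genuinely nontrivial and the naive argument does not close. One must show carefully that each mismatched high-multiplicity $\alpha_i$-point contributes to the reduced counting function a factor only of order $1/(l+1)$ relative to its actual multiplicity, and then verify that with $l \geq 3$ this extra slack keeps the coefficient of $T(r)$ on the right-hand side strictly below $n+k-1$. Once that estimate is secured, Case II is essentially formal and merely invokes the uniqueness polynomial property from Theorem \ref{1111}.
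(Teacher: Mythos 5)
The paper does not actually prove this statement (it is imported from Bai--Han--Chen), but its own Theorems \ref{th1.1} and \ref{th2.1} are proved by exactly the architecture you propose: Fujimoto's auxiliary function $H$, the dichotomy $H\equiv 0$ versus $H\not\equiv 0$, the second fundamental theorem in the first case, and integration plus a Fujimoto-type lemma in the second. So your overall plan is the standard and correct one. Your Case II is acceptable modulo the hand-waving: the claim $c_1=0$ is precisely Lemma \ref{lemB}, whose hypotheses ($n\ge 5$; $k\ge 3$, or $k=2$ with $P'$ free of simple zeros; the separation condition (\ref{jkms2})) are all supplied by the hypotheses of Theorem \ref{1111}, so you should simply invoke that lemma rather than sketch an ad hoc Nevanlinna argument about poles of $g$.

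The genuine gap is in Case I, in the treatment of the unmatched $\alpha_i$-points. First, your bound $\ol N_{\ast}(r,\infty;F,G)\le \frac{1}{l+1}\left[\ol N(r,0;P(f))+\ol N(r,0;P(g))\right]$ is wrong as written: dividing a \emph{reduced} counting function by $l+1$ does not dominate the reduced count of the multiplicity-$\ge l+1$ points (consider the case where every zero of $P(f)$ has multiplicity exactly $l+1$ and is mismatched); the correct right-hand side uses the full counting functions $N$. Second, and more seriously, once corrected that bound is of size $\frac{1}{l+1}N(r,0;P(f))\approx \frac{n}{l+1}T(r,f)$, so your unspecified constant $C$ is of order $n$, the final inequality reads roughly $n+k-1\le 3k+5+\frac{n}{2}$, and this is \emph{not} contradicted by $n>2k+6$. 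You also overlook that under weak-weighted sharing $F$ and $G$ need not share $\infty$ even IM: a pole of $F$ of order $\ge l+1$ may fail to be a pole of $G$ altogether, and such points create additional simple poles of $H$ that your estimate for $N(r,\infty;H)$ does not account for. The way to close the argument --- and the way the paper's inequality (\ref{2.0}) handles the analogous terms at weight two --- is to note that every unmatched or multiplicity-mismatched $\alpha_i$-point of $f$ has multiplicity at least $l+1\ge 4$, hence is a zero of $f'$ of multiplicity at least $l\ge 3$, and then to absorb all such contributions (together with $\ol N(r,\infty;F|\ge 2)$ and $\ol N_{0}(r,0;g')$) into $N(r,0;f')+N(r,0;g')$, which is in turn bounded by $N(r,0;f)+N(r,0;g)+\ol N(r,\infty;f)+\ol N(r,\infty;g)+S(r)$ --- a quantity whose coefficient in front of $T(r)$ is independent of $n$. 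Without this absorption step the right-hand side grows with $n$ and the proof does not close under the stated hypothesis $n>2k+6$.
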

Using the concept of weighted sharing and weak-weighted sharing, Banerjee and Lahiri (\cite{BL}) gave some equivalence between the two notions of unique range sets as follows:
\begin{theo}\label{tD}(\cite{BL}) Let $P(z)=a_{n}z^{n}+\sum\limits_{j=2}^{m}a_{j}z^{j}+a_{0}$ be a polynomial of degree $n$, where $n-m\geq 3$ and $a_{p}a_{m}\not=0$ for some positive integer $p$ with $2\leq p\leq m$ and $\gcd(p,3)=1$. Suppose further that $S=\{\alpha_{1},\alpha_{2},\ldots,\alpha_{n}\}$ be the set of all distinct zeros of $P(z)$. Let $k$ be the number of distinct zeros of the derivative $P'(z)$. If $n\geq 2k+7~(\text{resp.}~2k+3)$, then the following statements are equivalent:
\begin{enumerate}
\item [i)] $P$ is a \enquote{uniqueness polynomial} for meromorphic (resp. entire) function.
\item [ii)] $S$ is a $URSM_{3)}$ (resp. $URSE_{3)}$).
\item [iii)] $S$ is a URSM (resp. URSE).
\item [iv)] $P$ is a \enquote{uniqueness polynomial in broad sense} for meromorphic (resp. entire) function.
\end{enumerate}
\end{theo}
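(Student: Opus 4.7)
The plan is to close the cycle $(i)\Rightarrow(ii)\Rightarrow(iii)\Rightarrow(iv)\Rightarrow(i)$. Three of these links are essentially formal and follow as in the proof of Theorem \ref{B}: for $(ii)\Rightarrow(iii)$ one uses that $E_{f}(S)=E_{g}(S)$ immediately forces $E_{3)}(S,f)=E_{3)}(S,g)$; for $(iii)\Rightarrow(iv)$ one notes that $P(f)\equiv P(g)$ makes the zero divisors of $P(f)$ and $P(g)$ coincide, hence $E_{f}(S)=E_{g}(S)$, so URSM gives $f\equiv g$; and for $(iv)\Rightarrow(i)$ the argument of Banerjee--Lahiri \cite{BL} based on the arithmetic hypotheses $n-m\geq 3$, $\gcd(p,3)=1$, and $n\geq 2k+7$ applies verbatim.

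The main technical content is the implication $(i)\Rightarrow(ii)$. Assume $f,g$ are non-constant meromorphic with $E_{3)}(S,f)=E_{3)}(S,g)$, set $F:=1/P(f)$, $G:=1/P(g)$, and introduce Fujimoto's auxiliary function
$$H:=\frac{F''}{F'}-\frac{G''}{G'}.$$
Proceed by the standard dichotomy on whether $H\equiv 0$.

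In the case $H\not\equiv 0$, the essential change compared with the proof of Theorem \ref{th1.1} is the bookkeeping of shared poles. Under weak-weighted sharing with weight $3$, poles of $F$ and $G$ of orders $1,\,2,\,3$ still coincide with equal multiplicities, but poles of order $\geq 4$ may be unmatched and thus contribute to $\overline{N}_{*}(r,\infty;F,G)$. Every simple common pole of $F,G$ remains a zero of $H$, so one retains $N(r,\infty;F\mid=1)\leq N(r,\infty;H)+S(r)$. The extra unmatched contribution is controlled by
$$\overline{N}(r,\infty;F\mid\geq 4)\leq\tfrac{1}{4}N(r,\infty;F)\leq\tfrac{n}{4}T(r,f)+S(r),$$
and the small factor $\tfrac14$ coming from the weak-weight $3$ is precisely what preserves the classical threshold. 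Inserting these bounds together with the Milloux-type inequality used in (\ref{2.0}) into the Second Fundamental Theorem applied to $f$ and $g$ with target set $S\cup\{\beta_{1},\ldots,\beta_{k}\}$ (where $\beta_{j}$ are the distinct zeros of $P'$), one expects an inequality of the form
$$(n+k-1)\,T(r,f)\leq (2k+6-\varepsilon)\,T(r)+S(r),$$
contradicting $n\geq 2k+7$. The entire-function case $n\geq 2k+3$ is handled identically, with the pole contributions of $f,g$ suppressed throughout.

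In the case $H\equiv 0$, integrating twice yields $1/P(f)=c_{0}/P(g)+c_{1}$ for some $c_{0}\neq 0$ and $c_{1}\in\mathbb{C}$. Since $n\geq 2k+7\geq 9$ and $P$ meets the structural hypotheses of Lemma \ref{A}, we conclude $c_{1}=0$. Hence $P(f)\equiv c_{0}^{-1}P(g)$, and hypothesis (i)---that $P$ is a uniqueness polynomial---gives $f\equiv g$. The most delicate step will be the bookkeeping in Case $H\not\equiv 0$: the unmatched poles of order $\geq 4$ must be assigned to $\overline{N}_{*}$ in such a way that the Milloux bound still yields the $(2k+6)$-threshold, and the weak-weight value $3$ (rather than $1$ or $2$) is exactly what makes the coefficient $\tfrac14$ sufficient to absorb the loss.
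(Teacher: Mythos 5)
You should first be aware that the paper does not prove Theorem \ref{tD} at all: it is quoted verbatim from Banerjee--Lahiri \cite{BL}, and even the paper's own weak-weight-three analogue (Theorem \ref{th2.1}, and Corollary \ref{th2.2}) is stated with the proof omitted as ``similar to the proof of Theorem \ref{th1.1}''. Measured against that template, your architecture is the right one: the cycle $(i)\Rightarrow(ii)\Rightarrow(iii)\Rightarrow(iv)\Rightarrow(i)$ with the middle links formal, $(iv)\Rightarrow(i)$ resting on the arithmetic hypotheses $n-m\geq 3$, $a_pa_m\neq 0$, $\gcd(p,3)=1$ (here you only cite \cite{BL}, which is acceptable for a quoted theorem), and $(i)\Rightarrow(ii)$ carried by Fujimoto's function $H=F''/F'-G''/G'$, the second fundamental theorem when $H\not\equiv 0$, and Lemma \ref{A} together with the uniqueness-polynomial hypothesis when $H\equiv 0$. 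The branch $H\equiv 0$ is correct as written.

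The genuine gap is the quantitative treatment of the unmatched poles in the branch $H\not\equiv 0$. Under $E_{3)}$-sharing an $S$-point of $f$ of multiplicity $\geq 4$ need not be an $S$-point of $g$ at all, and you propose to control these by $\overline{N}(r,\infty;F\mid\geq 4)\leq\frac{1}{4}N(r,\infty;F)$. The inequality is true, but $N(r,\infty;F)=N(r,0;P(f))$ is of size $nT(r,f)$, so this injects a term of order $\frac{n}{4}\left(T(r,f)+T(r,g)\right)$ into the right-hand side of the final estimate. Against a left-hand side $(n+k-1)T(r)$ and a remaining right-hand side of order $(3k+5)T(r)$, as in (\ref{equn2.1}), this forces roughly $n>4k+12$ rather than $n\geq 2k+7$: the factor $\frac{1}{4}$ does \emph{not} preserve the classical threshold, because the quantity it multiplies grows linearly in $n$. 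The device that actually works --- and the one the paper itself uses in estimate (\ref{2.0}) for the weight-two case, following \cite{Y90} and \cite{Bai} --- is ramification: at an $S$-point of $f$ of multiplicity $q\geq 4$ the derivative $f'$ vanishes to order $q-1\geq 3$, whence $3\,\overline{N}(r,\infty;F\mid\geq 4)\leq N(r,0;f')\leq N(r,0;f)+\overline{N}(r,\infty;f)+S(r,f)$, a bound independent of $n$. Rerouting the unmatched and mismatched poles through $N(r,0;f')$ and $N(r,0;g')$ in this way is exactly why weak-weight $3$ (and not $1$ or $2$) suffices, and it is the step your sketch replaces with an estimate that cannot close the argument.
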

Here we also observed that to show the equivalence between the statements (ii) and (iii) in Theorem \ref{tD} one does not need the condition \enquote{$n-m\geq 3$}. Now, we state the result:
\begin{theo}\label{th2.1}
Let $P(z)$ be a polynomial of degree $n$ such that $P(z)=a_{0}(z-\alpha_{1})(z-\alpha_{2})\ldots(z-\alpha_{n})$; where $\alpha_{i}\not=\alpha_{j}$, $1\leq i,j\leq n$. Further suppose that $S=\{\alpha_{1},\alpha_{2},\ldots,\alpha_{n}\}$ be the set of all distinct zeros of $P(z)$. Let $k$ be the number of distinct zeros of the derivative $P'(z)$. Let $f$ and $g$ be two non-constant meromorphic functions such that $$\Theta(\infty;f)+\Theta(\infty;g)+\frac{1}{2}\min\{\delta(0,f),\delta(0,g)\}>\frac{2k+6-n}{2}.$$
Then the following two statements are equivalent:
\begin{enumerate}
\item [a)] If $E_{3)}(S,f)=E_{3)}(S,g)$, then $f\equiv g$.
\item [b)] If $E_{f}(S)=E_{g}(S)$, then $f\equiv g$.
\end{enumerate}
\end{theo}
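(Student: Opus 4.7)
The implication $(a)\Rightarrow(b)$ is immediate: the CM condition $E_f(S)=E_g(S)$ forces agreement of multiplicities at every common $\alpha_i$-point and therefore implies the restricted equality $E_{3)}(S,f)=E_{3)}(S,g)$, so (a) delivers (b). For the substantive direction $(b)\Rightarrow(a)$, I plan to mirror the proof of Theorem~\ref{th1.1} with weak-weight-$3$ replacing weight-$2$ in the bookkeeping.

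Given $f,g$ with $E_{3)}(S,f)=E_{3)}(S,g)$, I would introduce $F=1/P(f)$, $G=1/P(g)$, and Fujimoto's auxiliary function $H=F''/F'-G''/G'$, and split into the two cases $H\equiv 0$ and $H\not\equiv 0$. The case $H\equiv 0$ is essentially identical to its counterpart in Theorem~\ref{th1.1}: integration yields $1/P(f)\equiv c_0/P(g)+c_1$ for some non-zero constant $c_0$, a pole-order comparison at any $\alpha_i$-point of $f$ shows that $f$ and $g$ in fact share $S$ CM, and hypothesis (b) then forces $f\equiv g$. No weighted-sharing information is needed in this case.

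The case $H\not\equiv 0$ is where the weak-weight-$3$ hypothesis enters, and its verification is the main technical task. Weak-weight-$3$ sharing still forces simple $\alpha_i$-points of $f$ to be simple $\alpha_j$-points of $g$, so
\[N(r,\infty;F\mid=1)=N(r,\infty;G\mid=1)\leq N(r,\infty;H)+S(r)\]
remains valid, and contributions from $\alpha$-points of multiplicity $2$ or $3$ transfer cleanly between $F$ and $G$ in the analogue of (\ref{equn1.3}). The genuine difference arises at multiplicities $\geq 4$, where weak-weight-$3$ gives no direct link between $f$ and $g$; this is the principal obstacle. I would handle it by arguing, point by point, that any $\alpha_i$-point of $f$ of multiplicity $p\geq 4$ is either a high-multiplicity $\alpha_j$-point of $g$ (contributing to $\overline{N}_{\ast}(r,\infty;F,G)$) or not a pole of $G$ at all, and in either case is a zero of $f'$ of order $p-1\geq 3$ (with the symmetric statement for $g$), so that the excess terms can be absorbed into $N(r,0;f')$ and $N(r,0;g')$. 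With this accounting the analogue of (\ref{2.0}) goes through, and the second fundamental theorem applied to $f$, and symmetrically to $g$, delivers
\[(n+k-1)T(r)\leq\bigl(3k+5-2\Theta(\infty;f)-2\Theta(\infty;g)-\delta(0)+\varepsilon\bigr)T(r)+S(r),\]
which contradicts the deficiency hypothesis and returns us to the case $H\equiv 0$ treated above.
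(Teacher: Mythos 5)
Your overall plan is the one the paper intends (the paper itself only says the proof is \enquote{similar to} that of Theorem \ref{th1.1} and omits all details), and most of your sketch is sound: $(a)\Rightarrow(b)$ is indeed immediate, the case $H\equiv 0$ goes through verbatim since the pole-order comparison needs no sharing hypothesis beyond $H\equiv 0$ itself, and you correctly locate the only genuinely new difficulty in Case $H\not\equiv 0$, namely the $\alpha_i$-points of multiplicity $\geq 4$, which under weak-weight-$3$ sharing need not be $S$-points of the other function at all.

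However, your resolution of that difficulty --- \enquote{the excess terms can be absorbed into $N(r,0;f')$ and $N(r,0;g')$}, after which \enquote{the analogue of (\ref{2.0}) goes through} --- is asserted rather than carried out, and as described it does not close. Let $\ol{N}_{f}^{0}(r)$ denote the reduced counting function of those $\alpha_i$-points of $f$ of multiplicity $p\geq 4$ that are not $S$-points of $g$. In the inequality for $(n+k-1)T(r,f)$ these points are charged twice: once inside $\ol{N}(r,\infty;F\mid\geq 2)$ (they are multiple poles of $F$), and once more through the bound $N(r,\infty;F\mid=1)\leq N(r,\infty;H)+S(r)$, because $F''/F'$ has a simple pole of residue $-(p+1)$ there while $G''/G'$ is in general regular, so these points must be counted among the poles of $H$. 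The absorption target in (\ref{2.0}) is $N(r,0;g')$, but these points need not be zeros of $g'$, so the analogue of (\ref{2.0}) fails precisely for the term $2\ol{N}_{f}^{0}(r)$. Rerouting it through $N(r,0;f')$, as you propose (each such point being a zero of $f'$ of order $p-1\geq 3$), gives $2\ol{N}_{f}^{0}(r)\leq\tfrac{2}{3}N(r,0;f')\leq\tfrac{2}{3}\bigl(N(r,0;f)+\ol{N}(r,\infty;f)\bigr)+S(r)$, which inflates the constant $3k+5$ in (\ref{equn2.1}) to $3k+\tfrac{19}{3}$; the resulting inequality no longer contradicts the stated deficiency hypothesis, and in particular Corollary \ref{th2.2} at the threshold $n=2k+7$ would not follow. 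This absorption step is the entire mathematical content separating Theorem \ref{th2.1} from Theorem \ref{th1.1}, so you need to supply a bookkeeping that actually reproduces (\ref{equn2.3}) --- for instance a symmetrized estimate in which the $f$-only and $g$-only high-multiplicity points are each charged against their own derivative with weights that survive the passage to $T(r)=\max\{T(r,f),T(r,g)\}$ --- and verify the final constant explicitly rather than asserting it is unchanged.
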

The proof of this theorem is similar to the proof of Theorem \ref{th1.1}. So we omit it.
\begin{cor}\label{th2.2}
Let $P(z)$ be a polynomial of degree $n$ such that $P(z)=a_{0}(z-\alpha_{1})(z-\alpha_{2})\ldots(z-\alpha_{n})$; where $\alpha_{i}\not=\alpha_{j}$, $1\leq i,j\leq n$. Further suppose that $S=\{\alpha_{1},\alpha_{2},\ldots,\alpha_{n}\}$ be the set of all distinct zeros of $P(z)$. Let $k$ be the number of distinct zeros of the derivative $P'(z)$.  If $n\geq 2k+7~(resp.~2k+3)$, then the following two statements are equivalent:
\begin{enumerate}
\item [a)] $S$ is a $URSM_{3)}$ (resp. $URSE_{3)}$).
\item [b)] $S$ is a URSM (resp. URSE).
\end{enumerate}
\end{cor}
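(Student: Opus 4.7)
The plan is to deduce Corollary \ref{th2.2} as a direct consequence of Theorem \ref{th2.1}, by observing that the deficiency hypothesis appearing there is automatically satisfied under the given cardinality assumption on $n$.

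First I would dispatch the easy direction (a) $\Rightarrow$ (b): if $S$ is a $URSM_{3)}$ (resp.\ $URSE_{3)}$) and $f, g$ are non-constant meromorphic (resp.\ entire) functions with $E_f(S) = E_g(S)$, then trivially $E_{3)}(S, f) = E_{3)}(S, g)$, because $E_{3)}(S,\cdot)$ is obtained from the full preimage-with-multiplicity by restricting to preimages of order at most $3$, and this restriction respects equality. Hence the $URSM_{3)}$ hypothesis forces $f \equiv g$.

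For the converse (b) $\Rightarrow$ (a), I would fix an arbitrary pair of non-constant meromorphic (resp.\ entire) functions $f, g$ with $E_{3)}(S,f) = E_{3)}(S,g)$ and verify the deficiency hypothesis of Theorem \ref{th2.1} for this pair. In the meromorphic case $n \geq 2k+7$, the right-hand side $\frac{2k+6-n}{2} \leq -\tfrac{1}{2}$ is strictly negative, whereas the left-hand side $\Theta(\infty;f)+\Theta(\infty;g)+\tfrac{1}{2}\min\{\delta(0,f),\delta(0,g)\}$ is always non-negative, so the inequality is unconditional. In the entire case $n \geq 2k+3$, every non-constant entire function has $\ol{N}(r,\infty;\cdot)\equiv 0$ and hence $\Theta(\infty;\cdot)=1$, which gives a left-hand side $\geq 2$, strictly greater than the largest possible value $\tfrac{3}{2}$ of the right-hand side.

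Once the hypothesis of Theorem \ref{th2.1} is verified for the fixed pair, statements (a) and (b) of that theorem are equivalent for $(f,g)$. The URSM (resp.\ URSE) property assumed in our Corollary supplies the universal implication \enquote{$E_f(S)=E_g(S)\Rightarrow f\equiv g$} and therefore, in particular, statement (b) of Theorem \ref{th2.1} for this pair; its companion (a) then yields $f\equiv g$. Since $f,g$ were arbitrary, $S$ is a $URSM_{3)}$ (resp.\ $URSE_{3)}$). There is no genuine obstacle to the argument; the only content is the numerical observation that $\frac{2k+6-n}{2}$ is dominated by the deficiency expression under the given degree bound, and everything else is bookkeeping and an application of the previously-established Theorem \ref{th2.1}.
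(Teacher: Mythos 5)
Your proposal is correct and is exactly the argument the paper intends: the corollary is presented as an immediate consequence of Theorem \ref{th2.1}, the only content being that under $n\geq 2k+7$ (resp.\ $n\geq 2k+3$) the deficiency hypothesis $\Theta(\infty;f)+\Theta(\infty;g)+\frac{1}{2}\min\{\delta(0,f),\delta(0,g)\}>\frac{2k+6-n}{2}$ holds automatically for every pair, since the left side is nonnegative (resp.\ at least $2$ for entire functions) while the right side is at most $-\frac{1}{2}$ (resp.\ $\frac{3}{2}$). Your bookkeeping of the quantifiers (fixing a pair, invoking the pairwise equivalence, then quantifying over all pairs) and the easy direction are both sound.
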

Combining Theorem \ref{th1.1} and Theorem \ref{th2.1}, the following conclusion is obvious:
\begin{cor}\label{th2.4}
Let $P(z)$ be a polynomial of degree $n$ such that $P(z)=a_{0}(z-\alpha_{1})(z-\alpha_{2})\ldots(z-\alpha_{n})$; where $\alpha_{i}\not=\alpha_{j}$, $1\leq i,j\leq n$. Further suppose that $S=\{\alpha_{1},\alpha_{2},\ldots,\alpha_{n}\}$ be the set of all distinct zeros of $P(z)$. Let $k$ be the number of distinct zeros of the derivative $P'(z)$.  Let $f$ and $g$ be two non-constant meromorphic functions such that $$\Theta(\infty;f)+\Theta(\infty;g)+\frac{1}{2}\min\{\delta(0,f),\delta(0,g)\}>\frac{2k+6-n}{2}.$$
Now the following statements are equivalent:
\begin{enumerate}
\item [a)] If $E_{f}(S)=E_{g}(S)$, then $f\equiv g$.
\item [b)] If $E_{f}(S,2)=E_{g}(S,2)$, then $f\equiv g$.
\item [c)] If $E_{3)}(S,f)=E_{3)}(S,g)$, then $f\equiv g$.
\end{enumerate}
\end{cor}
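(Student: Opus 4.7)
The plan is simply to chain together the two equivalences already established in Theorem~\ref{th1.1} and Theorem~\ref{th2.1}. The polynomial $P$, the set $S=\{\alpha_{1},\ldots,\alpha_{n}\}$, the integer $k$, and the deficiency inequality
\[
\Theta(\infty;f)+\Theta(\infty;g)+\tfrac{1}{2}\min\{\delta(0,f),\delta(0,g)\}>\tfrac{2k+6-n}{2}
\]
assumed in Corollary~\ref{th2.4} are \emph{exactly} those imposed in Theorems~\ref{th1.1} and \ref{th2.1}, so both theorems apply verbatim to the same polynomial $P$ and the same pair $(f,g)$ without any rewrapping of hypotheses.

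First I would invoke Theorem~\ref{th1.1} to obtain the equivalence of the present statements (a) and (b). Next I would invoke Theorem~\ref{th2.1}, applied to the same data, to obtain the equivalence of (a) and (c). Transitivity of logical equivalence then yields (b)$\Leftrightarrow$(c) via (a), and therefore (a), (b), (c) are pairwise equivalent, which is precisely the content of the corollary.

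There is essentially no obstacle, precisely because the deficiency condition and the polynomial data match across the two previously established theorems. For transparency one may remark that the two directions (a)$\Rightarrow$(b) and (a)$\Rightarrow$(c) are trivial and in fact do not require the deficiency hypothesis at all: if $E_{f}(S)=E_{g}(S)$ then, by unwinding the definitions, both $E_{f}(S,2)=E_{g}(S,2)$ and $E_{3)}(S,f)=E_{3)}(S,g)$ follow immediately, so the implications on $f\equiv g$ transfer. The substantive content inherited from Theorems~\ref{th1.1} and \ref{th2.1} lies in the reverse directions, where the Fujimoto auxiliary function $H=F''/F'-G''/G'$, the lemma of logarithmic derivatives, the second fundamental theorem, and the deficiency hypothesis combine to force $H\equiv 0$ and thereby upgrade weight-$2$ or weak-weight-$3)$ sharing to genuine CM sharing of $S$. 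Since those arguments are already carried out in the proofs of Theorems~\ref{th1.1} and \ref{th2.1}, nothing further needs to be done here beyond the two-line transitivity argument.
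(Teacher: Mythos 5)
Your proof is correct and is exactly the paper's argument: the corollary appears there with the one-line justification that it follows by combining Theorems \ref{th1.1} and \ref{th2.1}, which is precisely your transitivity chain through statement (a). One small correction to your closing remark: the trivial directions are (b)$\Rightarrow$(a) and (c)$\Rightarrow$(a) --- since $E_{f}(S)=E_{g}(S)$ implies the weaker sharing hypotheses $E_{f}(S,2)=E_{g}(S,2)$ and $E_{3)}(S,f)=E_{3)}(S,g)$, a uniqueness conclusion under a weaker hypothesis transfers to the stronger one --- whereas (a)$\Rightarrow$(b) and (a)$\Rightarrow$(c) are the substantive directions that need the $H$-function argument and the deficiency condition, so you have the labels of the easy and hard implications swapped.
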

\section{Functions sharing two sets}
In 1994,  H. X. Yi (\cite{2Yi}) gave a positive answer to the Gross's question (Question \ref{q1}). Infact, he (\cite{2Yi}) gave  the existence of two finite sets $S_{1}$ (with $5$ elements) and $S_{2}$ (with one element) such that if any two non-constant entire functions $f$ and $g$ satisfying the condition $E(S_{j}, f) = E(S_{j}, g)$ for $j = 1, 2$, then $f\equiv g$.\par
Later in 1998, the same author (\cite{2Yi98}) proved that there exist two finite sets $S_{1}$ (with $3$ elements) and $S_{2}$ (with one element) such that any two non-constant entire functions $f$ and $g$ satisfying the condition $E(S_{j}, f) = E(S_{j}, g)$ for $j = 1, 2$ must be identical.\par
Moreover, H. X. Yi (\cite{2Yi98}), proved the following theorem:
\begin{theo}(\cite{2Yi98})
If $S_{1}$ and $S_{2}$ are two sets of  finite distinct complex numbers such that any two entire functions $f$ and $g$ satisfying $E_{f}(S_{j})= E_{g}(S_{j})$ for $j = 1,2$, must be identical, then $\max\{\sharp(S_1),\sharp(S_2)\} \geq 3$, where $\sharp(S)$ denotes the cardinality of the set $S$.
\end{theo}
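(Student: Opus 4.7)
The plan is to argue by contrapositive: assuming $\max\{\sharp(S_1),\sharp(S_2)\}\leq 2$, I will exhibit non-identical entire functions $f,g$ with $E_f(S_j)=E_g(S_j)$ for $j=1,2$. A case analysis on $(\sharp(S_1),\sharp(S_2))$ gives three sub-cases, and in each sub-case an explicit pair built from exponential functions, after at most an affine or M\"obius normalisation, settles the matter.

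First I would dispose of the case $\sharp(S_1)=\sharp(S_2)=1$, writing $S_1=\{a\}$, $S_2=\{b\}$ with $a\neq b$, by taking
\[f(z)=a+e^z,\qquad g(z)=a+(b-a)^2\,e^{-z}.\]
Both functions omit $a$, and $f(z)=b$ and $g(z)=b$ both reduce to $e^z=b-a$, so the $b$-level sets agree as divisors of simple zeros. For the mixed case $\sharp(S_1)=2$, $\sharp(S_2)=1$, writing $S_1=\{a_1,a_2\}$, $S_2=\{b\}$ with $b\notin S_1$, the analogous pair
\[f(z)=b+e^z,\qquad g(z)=b+(a_1-b)(a_2-b)\,e^{-z}\]
omits $b$ and interchanges the roles of $a_1,a_2$, so that $E_f(S_1)=E_g(S_1)$; the boundary possibility $b\in S_1$ reduces to a two-value sharing problem handled as in the first case.

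The real obstacle is the case $\sharp(S_1)=\sharp(S_2)=2$ with four distinct values $a_1,a_2,b_1,b_2$. Here the elementary cross-ratio identity
\[(a_1,a_2;b_1,b_2)=(a_2,a_1;b_2,b_1)\]
produces a M\"obius involution $\phi$ with $\phi(a_1)=a_2$ and $\phi(b_1)=b_2$, and hence $\phi(a_2)=a_1$, $\phi(b_2)=b_1$. I would set $f(z)=v+e^z$, choosing $v=\phi^{-1}(\infty)$ when that pole is finite and any $v$ otherwise, and take $g=\phi\circ f$. Since $f$ omits $v$, the function $g$ is entire; the setwise action of $\phi$ together with the biholomorphy of M\"obius maps away from their pole forces $E_f(S_j)=E_g(S_j)$ as divisors, and since $f$ is non-constant it cannot be identically equal to either of the two fixed points of $\phi$, so $g\not\equiv f$. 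The remaining overlapping sub-cases $\sharp(S_1\cap S_2)\in\{1,2\}$ collapse, via $E_f(S_1)\cap E_f(S_2)=E_f(S_1\cap S_2)$, to CM-sharing of three or two individual values, for which exponential counterexamples of the earlier shape suffice.

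The principal difficulty is in this four-distinct-points sub-case: I must verify both that the swapping M\"obius involution $\phi$ exists universally (underwritten by the cross-ratio identity above) and that the shift $v$ can always be placed so as to keep $g$ entire without collapsing $f$ to a constant or to one of the two fixed points of $\phi$. Once the ansatz is fixed, verifying the multiplicity coincidence (CM, not merely IM) at each level set is a short algebraic calculation using the fact that M\"obius maps are locally biholomorphic at every point other than their pole.
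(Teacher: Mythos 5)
There is a genuine gap, and it sits exactly where you flag the argument as delicate but then wave it through: the overlapping sub-case $\sharp(S_1)=\sharp(S_2)=2$ with $\sharp(S_1\cap S_2)=1$. (The paper gives no proof of this statement -- it is quoted from Yi's 1998 paper -- so I am judging your argument on its own terms.) Your reduction there is correct: writing $S_1=\{c,a\}$, $S_2=\{c,b\}$, the identity $E_f(S_1)\cap E_f(S_2)=E_f(S_1\cap S_2)$ shows that sharing both sets CM is equivalent to sharing the three finite values $a,b,c$ CM individually. But ``exponential counterexamples of the earlier shape'' do not exist for a generic triple. Non-constant entire functions also share $\infty$ CM vacuously, so if $f\not\equiv g$ share $a,b,c$ CM they share four values CM, and Nevanlinna's four-CM-value theorem forces two of $a,b,c,\infty$ to be Picard exceptional for both and the quadruple to be harmonic; since $\infty$ is necessarily one of the two omitted values, the cross-ratio condition becomes $\frac{a-c}{b-c}=-1$ up to permutation, i.e. one of $a,b,c$ must be the arithmetic mean of the other two. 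For $S_1=\{0,1\}$ and $S_2=\{0,3\}$ no such relation holds, so \emph{every} pair of non-constant entire functions sharing these two sets is identical and no counterexample of any shape can be produced; the contrapositive cannot be closed. Indeed this configuration satisfies the uniqueness hypothesis with $\max\{\sharp(S_1),\sharp(S_2)\}=2$, which shows that the statement as transcribed here (with no restriction excluding overlapping doubletons) cannot be proved because it is not literally true in that generality; the sharpness assertion has to be read with that degenerate overlap excluded, and your proof must either import such a hypothesis or abandon this route for that sub-case.

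The rest of your case analysis is essentially sound. The two singleton/mixed cases work as written; the four-distinct-points case is handled correctly and cleanly by the cross-ratio argument producing the M\"obius involution $\phi$, with $g=\phi\circ f$ entire because $f=v+e^z$ omits the pole $v=\phi(\infty)$ of $\phi$, multiplicities preserved because none of $a_1,a_2,b_1,b_2$ can equal that pole, and $g\not\equiv f$ because a non-constant $f$ cannot take values only in the fixed-point set of $\phi$. One smaller repair: the boundary case $b\in S_1$ reduces to sharing two prescribed values $u,v$ CM, which is not ``as in the first case'' (an omitted-value construction) but is supplied instead by the pair $\frac{u+v}{2}+\frac{u-v}{2}e^{z}$ and $\frac{u+v}{2}+\frac{u-v}{2}e^{-z}$.
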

Thus for the uniqueness of two entire functions when they share two sets, it is clear that the smallest cardinalities of $S_{1}$ and $S_{2}$ are $1$ and $3$ respectively.
\medbreak
Now, for the meromorphic functions, the Gross' Question should be the following:
\begin{ques}\label{q2}(\cite{2Yi})
Can one find two finite set $S_{j}~(j=1,2)$ such that if two non-constant meromorphic functions $f$ and $g$ share them, then $f\equiv g$?
\end{ques}
In 1994, H. X. Yi (\cite{2Yi}) completely answered the Question \ref{q2} by giving the existence of two finite sets $S_{1}$ (with $9$ elements) and $S_{2}$ (with $2$ elements) such that if any two non-constant meromorphic functions $f$ and $g$ satisfying the condition $E(S_{j}, f) = E(S_{j}, g)$ for $j = 1, 2$, then $f\equiv g$.\par
\medbreak
In this direction, in 2012, B. Yi and Y. H. Li (\cite{2YL}) provided a significant result. They proved that there exists two finite sets $S_{1}$ (with $5$ elements) and $S_{2}$ (with $2$ elements) such that if any two non-constant meromorphic functions $f$ and $g$ satisfying the condition $E(S_{j}, f) = E(S_{j}, g)$ for $j = 1, 2$, then $f\equiv g$.
\medbreak
The motivation of writting this section is to answer the Question \ref{q2} by giving the existence of two generic sets $S_{1}$ (with $?$ elements) and $S_{2}$ (with $2$ elements) for meromorphic function such that  if any two non-constant meromorphic functions $f$ and $g$ satisfying the condition $E(S_{j}, f) = E(S_{j}, g)$ for $j = 1, 2$, then $f\equiv g$.
\medbreak
Suppose
\begin{equation}\label{eb1}
P(z)=a_{0}(z-\alpha_{1})(z-\alpha_{2})\ldots(z-\alpha_{n}),
\end{equation}
 where $\alpha_{i}\not=\alpha_{j}$, $1\leq i,j\leq n$. Further suppose that
\begin{equation}\label{eb2}
P'(z)=b_{0}(z-\beta_{1})^{q_1}(z-\beta_{2})^{q_2}\ldots(z-\beta_{k})^{q_{k}}.
 \end{equation}
 Under the assumption (this property was introduced by H. Fujimoto (\cite{F1})) that
\begin{equation}\label{eb3}
  P(\beta_{l_{s}})\not=P(\beta_{l_{t}})~~~~(1\leq l_{s}< l_{t}\leq k).
\end{equation}
We state our main two theorems of this section:
\begin{theo}\label{thb1.1}
Let $P(z)$ be a \enquote{uniqueness polynomial} of the form (\ref{eb1}) satisfying the condition (\ref{eb3}). Further suppose that $S_{1}=\{\alpha_{1},\alpha_{2},\ldots,\alpha_{n}\}$ and $S_{2}=\{\beta_{1},\beta_{2},\ldots,\beta_{k}\}$.\par
If two non-constant meromorphic (resp. entire) functions $f$ and $g$ share the set $S_{1}$ with weight two and $S_{2}$ IM, $k\geq3$ and $n\geq k+7~(resp.~k+3)$, then $f\equiv g$.
\end{theo}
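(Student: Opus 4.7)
The plan is to mirror the Fujimoto-style scheme from Theorem \ref{th1.1}. I would set $F:=1/P(f)$, $G:=1/P(g)$, and consider Fujimoto's auxiliary function $H:=F''/F'-G''/G'$, then split according to whether $H\equiv 0$ or not.

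In the case $H\not\equiv 0$, the IM sharing of $S_{2}$ gets exploited at the level of poles of $H$. Any $\beta_{j}$-point $z_{0}$ of $f$ is, by hypothesis, simultaneously a $\beta_{i}$-point of $g$ for some $i$, so $F'$ and $G'$ both vanish at $z_{0}$; since $F''/F'$ and $G''/G'$ have only simple poles, $H$ has at most a simple pole there. Thus in the standard bound on $N(r,\infty;H)$, the term $\sum_{j=1}^{k}(\overline{N}(r,\beta_{j};f)+\overline{N}(r,\beta_{j};g))$ that arose in the proof of Theorem \ref{th1.1} collapses to the single sum $\overline{N}_{S_{2}}:=\sum_{j=1}^{k}\overline{N}(r,\beta_{j};f)=\sum_{j=1}^{k}\overline{N}(r,\beta_{j};g)$. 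Applying the Second Fundamental Theorem to $f$ with respect to $\{\alpha_{1},\dots,\alpha_{n},\beta_{1},\dots,\beta_{k},\infty\}$, using $\overline{N}(r,0;P(f))\leq N(r,\infty;H)+\overline{N}(r,\infty;F|\geq 2)+S(r)$ together with the analogue of estimate (\ref{2.0}), I would reach
\begin{equation*}
(n+k-1)T(r,f)\leq 2\overline{N}(r,\infty;f)+2\overline{N}(r,\infty;g)+2\overline{N}_{S_{2}}+N(r,0;g)+S(r),
\end{equation*}
along with the symmetric inequality for $g$. Since $\overline{N}_{S_{2}}\leq kT(r)$, $\overline{N}(r,\infty;\cdot)\leq T(r,\cdot)$, and $N(r,0;g)\leq T(r,g)$ (with $T(r):=\max\{T(r,f),T(r,g)\}$), the right-hand side is at most $(2k+5)T(r)+S(r)$, which contradicts $n\geq k+7$. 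In the entire-function case the two $\overline{N}(r,\infty;\cdot)$ terms drop out, tightening the threshold to $n\geq k+3$.

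In the case $H\equiv 0$, integration gives $1/P(f)=c_{0}/P(g)+c_{1}$ with $c_{0}\neq 0$. If $c_{1}=0$ then $P(f)\equiv c_{0}^{-1}P(g)$, and the uniqueness-polynomial hypothesis on $P$ forces $f\equiv g$. To rule out $c_{1}\neq 0$, clearing denominators yields the constant-value identity $(P(f)-1/c_{1})(P(g)+c_{0}/c_{1})\equiv -c_{0}/c_{1}^{2}$, so $f$ omits every zero of the polynomial $P(z)-1/c_{1}$. By Fujimoto's condition (\ref{eb3}), $P(z)-1/c_{1}$ has a multiple root at most at one $\beta_{j_{0}}$ (of multiplicity $q_{j_{0}}+1$), and since $\sum_{j}q_{j}=n-1$ with each $q_{j}\geq 1$, it has at least $n-q_{j_{0}}\geq k$ distinct zeros. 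Because $k\geq 3$, this makes $f$ omit at least three distinct finite values, contradicting Picard's theorem; therefore $c_{1}=0$.

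I expect the most delicate step to be the pole-counting for $H$ at the $\beta_{j}$-points in the first case: one must check that, whatever the (possibly differing) orders to which $F'$ and $G'$ vanish at a shared $\beta_{j}$-point, the induced pole of $H$ is at worst simple — this is what legitimises the $2$-to-$1$ reduction in the $N(r,\infty;H)$ estimate that gives the sharper threshold $n\geq k+7$ (resp.\ $k+3$). The $H\equiv 0$ endgame is comparatively clean, once one spots the factorisation pinning down the omitted value of $P(f)$ and applies Picard's theorem.
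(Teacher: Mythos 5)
Your Case~I is sound and is essentially the paper's own argument: the same auxiliary function $H=F''/F'-G''/G'$, the same exploitation of the IM sharing of $S_{2}$ to replace $\sum_{j}\bigl(\overline{N}(r,\beta_{j};f)+\overline{N}(r,\beta_{j};g)\bigr)$ by the single sum $\sum_{j}\overline{N}(r,\beta_{j};f)$ in the bound for $N(r,\infty;H)$, followed by the Second Fundamental Theorem. Your bookkeeping (applying the SFT to $f$ and $g$ separately and absorbing $\overline{N}(r,\infty;F|\geq2)+\overline{N}_{0}(r,0;g')+\overline{N}_{\ast}(r,\infty;F,G)$ into $N(r,0;g')$, as in the proof of Theorem \ref{th1.1}) differs slightly from the paper's (which adds the two SFT inequalities and then uses $\overline{N}(r,\infty;F)-\frac{1}{2}N(r,\infty;F|=1)+\frac{1}{2}\overline{N}_{\ast}(r,\infty;F,G)\leq\frac{1}{2}N(r,\infty;F)$ and its analogue for $G$), but both routes land on the same thresholds $n\geq k+7$ (resp.\ $k+3$).

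The genuine gap is in Case~II. From $(P(f)-1/c_{1})(P(g)+c_{0}/c_{1})\equiv -c_{0}/c_{1}^{2}$ you conclude that $f$ \emph{omits} every root of $P(z)-1/c_{1}$ and then invoke Picard. That inference is false for meromorphic functions: a zero of $P(f)-1/c_{1}$ is not forbidden by the identity, it merely forces $P(g)+c_{0}/c_{1}$ to have a pole there, i.e.\ the point must be a pole of $g$. (Your argument does go through when $g$ is entire, since then $P(g)+c_{0}/c_{1}$ is pole-free.) In the meromorphic case Picard gives nothing, and ruling out $c_{1}\neq0$ is precisely the hard point: even the natural repair --- applying the SFT to $f$ at the roots $\gamma_{i}$ of $P(z)-1/c_{1}$, using that each $\gamma_{i}$-point of $f$ is a pole of $g$ of controlled order --- only yields \enquote{at least $k$ distinct roots by condition (\ref{eb3}), at most $3$ by the SFT}, which is not a contradiction when $k=3$. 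The paper closes this case by citing Fujimoto's Lemma \ref{lemB} (\cite{F1}, p.~1200), whose proof uses finer divisibility/multiplicity estimates at the roots of $P(z)-1/c_{1}$ and $P(z)+c_{0}/c_{1}$ together with condition (\ref{eb3}); its hypotheses $n\geq5$, $k\geq3$ are met here since $n\geq k+7\geq10$. You should replace your Picard step by an appeal to that lemma (or reproduce its proof); as written, Case~II does not establish $c_{1}=0$ for meromorphic $f$ and $g$.
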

\begin{theo}\label{thbb1.1}
Let $P(z)$ be a \enquote{uniqueness polynomial} of the form (\ref{eb1}) satisfying the condition (\ref{eb3}). Further suppose that $S_{1}=\{\alpha_{1},\alpha_{2},\ldots,\alpha_{n}\}$ and $S_{2}=\{\beta_{1},\beta_{2},\ldots,\beta_{k}\}$.\par
Moreover, assume that $k \geq2$ and $P'(z)$ have no simple zeros. If two non-constant meromorphic (resp. entire) functions $f$ and $g$ share the set $S_{1}$ with weight $3$ and $S_{2}$ IM,  and $n\geq \max\{10-2k,5\}~(resp.~5)$, then $f\equiv g$.
\end{theo}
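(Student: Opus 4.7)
The plan is to adapt Fujimoto's auxiliary-function machinery, used earlier in the proof of Theorem \ref{th1.1} and implicit in Theorem \ref{thb1.1}. Set $F(z) := 1/P(f(z))$, $G(z) := 1/P(g(z))$, and introduce
\[
 H(z) := \frac{F''(z)}{F'(z)} - \frac{G''(z)}{G'(z)}.
\]
The argument splits according to whether $H \equiv 0$ or not. Recall that, because $f$ and $g$ share $S_{1}$ with weight $3$ and $S_{2}$ IM, poles of $F$ and $G$ match in a controlled way, and $\beta_{j}$-points of $f$ and $g$ occur at common locations (though possibly with different multiplicities). The hypothesis $q_{j}\geq 2$ for every $j$ will be used to handle the $\beta_{j}$-contribution efficiently.

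If $H\not\equiv 0$, I would first show, exactly as in the proof of Theorem \ref{th1.1}, that simple poles of $F$ (equivalently, simple $\alpha_{i}$-points of $f$) are zeros of $H$, giving
\[
 N(r,\infty;F\mid=1)=N(r,\infty;G\mid=1)\leq N(r,0;H)\leq N(r,\infty;H)+S(r).
\]
Next I would bound $N(r,\infty;H)$ by the distinct $\beta_{j}$-points of $f$ and $g$, the poles of $f$ and $g$, zeros of $f',g'$ lying off $S_{1}\cup S_{2}$, and the mismatched-multiplicity terms $\overline N_{\ast}(r,\infty;F,G)$. Weight-$3$ sharing of $S_{1}$ forces all such mismatches to lie among $\alpha_{i}$-points of multiplicity $\geq 4$, so $\overline N_{\ast}$ absorbs into $\tfrac{1}{4}$-type bounds; the hypothesis $q_{j}\geq 2$ makes each $\beta_{j}$-point of $f$ contribute with an effective factor $\leq 1/2$ to the bound, since such points have multiplicity at least $(q_{j}+1)/q_{j}\geq 3/2$ as zeros of $F'$. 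Applying the Second Main Theorem to $f$ with the $n+k+1$ values $\{\alpha_{i}\}\cup\{\beta_{j}\}\cup\{\infty\}$, substituting the $H$-bound, and doing the symmetric estimate for $g$, one obtains an inequality of the shape $(n+k-1)\,T(r)\leq (c_{0}k+c_{1})\,T(r)+S(r)$ that collapses precisely when $n\geq \max\{10-2k,5\}$ fails; this gives the contradiction.

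If $H\equiv 0$, integrating twice gives $1/P(f)=c_{0}/P(g)+c_{1}$ for constants $c_{0}\neq 0$ and $c_{1}$. If $c_{1}=0$, then $P(f)\equiv (1/c_{0})P(g)$, and since $P$ is a uniqueness polynomial, $f\equiv g$ and we are done. To rule out $c_{1}\neq 0$, observe that rewriting the relation as $1-c_{1}P(f)=c_{0}P(f)/P(g)$ shows that the zeros of $P(f)-1/c_{1}$ are exactly the poles of $g$, and a multiplicity count gives order $n\cdot(\text{pole order of }g)$ at each such point. Decomposing $P(w)-1/c_{1}$ into simple roots (plus, in the exceptional case $1/c_{1}\in P(S_{2})$, one multiple root at some $\beta_{j}$ with multiplicity $q_{j}+1$), and applying the Second Main Theorem to $f$ with these $n-q_{j}$ (or $n$) distinct values together with $\infty$, one obtains
\[
 (n-q_{j}-2)\,T(r,f)\leq T(r,g)+S(r)
\]
and a symmetric inequality for $g$. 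Adding the two and using $q_{j}\leq n-3$ (forced by $k\geq 2$ and $q_{i}\geq 2$) together with $n\geq 5$ yields $T(r,f)+T(r,g)=S(r)$, a contradiction; the sharing of $S_{2}$ IM together with condition (\ref{eb3}) is what forces the two critical values $1/c_{1}$ and $-c_{0}/c_{1}$ either to avoid $P(S_{2})$ or to hit only one $\beta_{j}$.

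The main obstacle is the Case-$1$ estimate: one must package the $\beta_{j}$-ramification of $F'$ (where the assumption $q_{j}\geq 2$ is crucial) against the weight-$3$ bound on $\overline N_{\ast}$ carefully enough that the coefficient of $T(r)$ on the right of the Nevanlinna inequality is beaten by $n+k-1$ precisely when $n\geq\max\{10-2k,5\}$; getting the constants $10-2k$ and $5$ to appear, rather than something weaker, requires tight use of both hypotheses simultaneously.
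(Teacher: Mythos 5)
Your global skeleton (the auxiliary function $H=F''/F'-G''/G'$, the dichotomy $H\equiv 0$ versus $H\not\equiv 0$, and the appeal to the uniqueness-polynomial property at the end) matches the paper, and your Case $H\equiv 0$ is essentially a re-derivation of Lemma \ref{lemB}, which the paper simply cites (with $n\geq 5$, $k\geq 2$ and $P'$ without simple zeros the lemma gives $c_1=0$ directly, so none of your second-main-theorem work on $P(f)-1/c_1$ is needed). The genuine gap is in Case I. You propose to extract a factor $\tfrac12$ from the $\beta_j$-contribution on the grounds that $\beta_j$-points of $f$ are zeros of $F'$ of high multiplicity. That does not work inside the $H$-framework: the poles of $H$ at zeros of $F'$ are \emph{simple} no matter how high the order of the zero of $F'$ is, so what enters the bound for $N(r,\infty;H)$ is the reduced counting function $\ol N(r,\beta_j;f)$ with coefficient $1$, and after the second fundamental theorem one is left with a term $3\sum_{j}\ol N(r,\beta_j;f)$, potentially of size $3k\,T(r)$, which destroys any hope of reaching the threshold $n\geq\max\{10-2k,5\}$. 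The multiplicity $m(q_j+1)-1$ of a $\beta_j$-point as a zero of $F'$ gives you nothing here because $H$ only sees the residue of $F''/F'$, not the order.

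The missing idea is a \emph{second} auxiliary function. The paper sets $\varphi:=F'/F-G'/G$ and splits again. If $\varphi\not\equiv 0$, then since $P'$ has no simple zeros, every $\beta_j$-point of $f$ is a zero of $F'/F=-P'(f)f'/P(f)$ of order at least $q_j\geq 2$, and (by the IM sharing of $S_2$) the same point is a zero of $G'/G$ of order at least $2$, hence a zero of $\varphi$ of order at least $2$. This yields
\begin{equation*}
2\sum_{j=1}^{k}\ol N(r,\beta_{j};f)\;\leq\; N(r,0;\varphi)\;\leq\; N(r,\infty;\varphi)+S(r)\;\leq\;\ol N(r,\infty;f)+\ol N(r,\infty;g)+\ol N_{\ast}(r,\infty;F,G)+S(r),
\end{equation*}
which converts the troublesome $3\sum_j\ol N(r,\beta_j;f)$ into $\tfrac32\bigl(\ol N(r,\infty;f)+\ol N(r,\infty;g)+\ol N_{\ast}\bigr)$; the $\ol N_{\ast}$ terms then cancel against the weight-$3$ surplus and one lands exactly on $(\tfrac n2+k-1)(T(r,f)+T(r,g))\leq\tfrac72(\ol N(r,\infty;f)+\ol N(r,\infty;g))+S(r)$, i.e., the bound $n\geq 10-2k$. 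If $\varphi\equiv 0$ then $F\equiv AG$, contradicting $H\not\equiv 0$. Without introducing $\varphi$ (or an equivalent device that genuinely exploits $q_j\geq 2$), your Case I does not close, so as written the proposal has a real gap at its central step.
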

\begin{exm}
Let $$P(z)=\frac{(n-1)(n-2)}{2}z^{n}-n(n-2)z^{n-1}+\frac{n(n-1)}{2}z^{n-2}-c,$$ where $c\in\mathbb{C}\setminus\{0, \frac{1}{2}, 1\}$, $n\geq 6$.  In (\cite{bcs}), it was proved that $P(z)$ is a \enquote{uniqueness polynomial} satisfying the condition (\ref{eb3}). Suppose that  $S_{1}=\{z~:~\frac{(n-1)(n-2)}{2}z^{n}-n(n-2)z^{n-1}+\frac{n(n-1)}{2}z^{n-2}-c=0\}$ and $S_{2}=\{0,1\}$.\par Then by the Theorem \ref{thbb1.1}, if two non-constant meromorphic functions $f$ and $g$ share $S_{1}$ with weight $3$ and $S_2$ IM, then $f\equiv g$.\\
\end{exm}
To prove the above two theorems, we need the following lemma:
\begin{lem}\label{lemB}(\cite{F1}, Page- 1200)
Let $P(z)$ be a polynomial of the form (\ref{eb1}) satisfying the condition (\ref{eb3}). Suppose that $$\frac{1}{P(f)}=\frac{c_0}{P(g)}+c_{1},$$
where $f$ and $g$ are non-constant meromorphic functions and $c_0(\not=0)$, $c_{1}$ are constants. If $n\geq 5$ and $k\geq3$, or, if $n\geq 5$, $k=2$ and $P'(z)$ have no simple zeros, then $c_1=0$.
\end{lem}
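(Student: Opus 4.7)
Assume for contradiction that $c_1 \neq 0$. The strategy is to clear denominators in the given relation to reveal a reciprocal identity between two meromorphic functions, extract from it strong multiplicity constraints at the $P$-preimages of $1/c_1$ and $-c_0/c_1$, and then combine Nevanlinna's second fundamental theorem with the combinatorial constraint $\sum_{j=1}^{k} q_j = n-1$ to reach a contradiction.

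First, rearranging $\frac{1}{P(f)} = \frac{c_0}{P(g)} + c_1$, I would extract the key identity
\[
\bigl(1 - c_1 P(f)\bigr)\Bigl(P(g) + \tfrac{c_0}{c_1}\Bigr) = \tfrac{c_0}{c_1},
\]
displaying $1 - c_1 P(f)$ and $P(g) + c_0/c_1$ as reciprocals up to a nonzero constant. Consequently, zeros of $1 - c_1 P(f)$ (i.e. the $\gamma$-points of $f$ for roots $\gamma$ of $P(z) - 1/c_1 = 0$) coincide in location and total multiplicity with poles of $g$; more precisely, if $\gamma$ is such a root of multiplicity $m$ and $f(z_0) = \gamma$ with multiplicity $t$, then $tm$ equals $n$ times the pole order of $g$ at $z_0$, so $t \geq n/m$. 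Symmetrically, every $\delta$-point of $g$ (for a root $\delta$ of $P(z) + c_0/c_1 = 0$ of multiplicity $m'$) occurs with multiplicity at least $n/m'$.

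By hypothesis (\ref{eb3}), at most one critical point $\beta_{j_0}$ of $P$ satisfies $P(\beta_{j_0}) = 1/c_1$; if so it contributes a root of $P(z) - 1/c_1$ of multiplicity $q_{j_0}+1$ while all other roots are simple, giving $s = n - q_{j_0}$ distinct roots $\gamma_1,\ldots,\gamma_s$ (or $s = n$ if no such $\beta_{j_0}$ exists). Applying the second fundamental theorem to $f$ at the values $\gamma_1,\ldots,\gamma_s,\infty$, and using the inequality $\ol{N}(r,\gamma_i;f) \leq (m_i/n)\,N(r,\gamma_i;f)$ together with $\sum_i m_i N(r,\gamma_i;f) = n\,N(r,\infty;g)$ and $T(r,f) = T(r,g) + O(1)$ (which follows from the reciprocal identity), I would deduce
\[
(s-1)\,T(r,f) \;\leq\; \ol{N}(r,\infty;f) + \sum_i \ol{N}(r,\gamma_i;f) + S(r,f) \;\leq\; 2\,T(r,f) + S(r,f),
\]
whence $s \leq 3$. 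The symmetric argument for $g$ gives $s' \leq 3$, and since $n \geq 5$ rules out $s = n$ or $s' = n$, both configurations force critical multiplicities $q_{j_0}, q_{j_1} \geq n - 3$.

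The combinatorial finish splits into two subcases. If $\beta_{j_0} \neq \beta_{j_1}$, then $q_{j_0} + q_{j_1} \geq 2(n-3)$, whereas $\sum_j q_j = n-1$ and the remaining $k-2$ critical multiplicities (each $\geq 1$, or $\geq 2$ when $k=2$ under the no-simple-zero hypothesis) yield $q_{j_0}+q_{j_1} \leq n - k + 1$ (respectively $\leq n - 1$), forcing $n \leq 7-k$ which directly contradicts $n \geq 5$ for $k \geq 3$ and essentially for $k=2$ as well. If $\beta_{j_0} = \beta_{j_1}$, then $1/c_1 = -c_0/c_1$ forces $c_0 = -1$; the equation then takes the symmetric form $\tfrac{1}{P(f)} + \tfrac{1}{P(g)} = c_1$, from which one computes $(1 - c_1 P(f))(1 - c_1 P(g)) = 1$, and the multiplicity identity pins down a small residual configuration (e.g.\ $k = 3$ with one critical point of multiplicity $n-3$ and two simple critical points, or the analogous $k=2$ boundary). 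The main obstacle is ruling out this residual boundary: the plan is to retain the ramification term $-N_0(r,0;f')$ in the second fundamental theorem and exploit the lower bound it receives from the forced high multiplicities $t \geq n/m_i$ at $\gamma_i$-points to upgrade $s \leq 3$ to a strict defect, or alternatively to use the reciprocal identity $(1 - c_1 P(f))(1 - c_1 P(g)) = 1$ to derive an algebraic relation between $f$ and $g$ incompatible with their non-constancy. In either closing move the assumption $c_1 \neq 0$ becomes untenable, so $c_1 = 0$.
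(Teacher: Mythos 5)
The paper offers no proof of this lemma at all: it is imported verbatim from Fujimoto (\cite{F1}, p.~1200), so your attempt can only be measured against Fujimoto's original argument, of which it is essentially a reconstruction. The skeleton is right and is correctly executed up to a point: the reciprocal identity $(1-c_1P(f))\left(P(g)+c_0/c_1\right)=c_0/c_1$, the induced relation $m_i t = n\cdot(\text{pole order of } g)$ at a point where $f$ takes a root $\gamma_i$ of $P(w)-1/c_1$ of multiplicity $m_i$ with multiplicity $t$, the deduction $s\le 3$ from the second fundamental theorem, the conclusion $q_{j_0},q_{j_1}\ge n-3$, and the count $\sum_j q_j=n-1$ are all correct.

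The gap is that the proof does not actually terminate. Your counting leaves two configurations alive, and for them you only sketch ``plans.'' (i) When $\beta_{j_0}\neq\beta_{j_1}$ and $k=2$, the inequality $2(n-3)\le q_{j_0}+q_{j_1}=n-1$ fails to be contradictory exactly at $n=5$, $q_1=q_2=2$; the word ``essentially'' conceals a genuine surviving case. (ii) The whole branch $\beta_{j_0}=\beta_{j_1}$ (forcing $c_0=-1$) is never closed; moreover, of your two proposed closing moves, the second cannot work as stated, because $(1-c_1P(f))(1-c_1P(g))=1$ is not by itself incompatible with non-constancy of $f$ and $g$ --- it is precisely the kind of relation the rest of the argument must analyze, not a contradiction. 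Both residual cases do die, but only after sharpening your multiplicity estimate: since $m_it$ must be a positive integer multiple of $n$, one has $t\ge\lceil n/m_i\rceil$ (indeed $t\ge n/\gcd(n,m_i)$), so $\ol N(r,\gamma_i;f)\le\frac{1}{2}N(r,\gamma_i;f)$ already whenever $1<m_i<n$, which is strictly better than the bound $\frac{m_i}{n}N(r,\gamma_i;f)$ you use. In both residual configurations one has $s=3$ with root multiplicities $(q_{j_0}+1,1,1)$ and $q_{j_0}+1\le n-2$, so the second fundamental theorem applied to the three roots yields $T(r,f)\le\left(\frac{1}{2}+\frac{2}{n}\right)T(r,f)+S(r,f)$, a contradiction for $n\ge5$. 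Without this (or an equivalent) refinement the argument stalls exactly where you stop.
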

\begin{proof}[\textbf{Proof of the Theorem \ref{thb1.1}}]  Since $f$ and $g$ share $S_{2}$ IM, so
$$\sum_{j=1}^{k}\overline{N}(r,\beta_{j};f)=\sum_{j=1}^{k}\overline{N}(r,\beta_{j};g).$$
Now, we put $$F(z):=\frac{1}{P(f(z))}~~\text{and}~~G(z):=\frac{1}{P(g(z))}.$$
Let $S(r)$ be any function $S(r):(0,\infty)\rightarrow\mathbb{R}$ satisfying $S(r)=o(T(r,F)+T(r,G))$ for $r\rightarrow\infty$ outside a set of finite Lebesgue Measure.\par
Let
$$H(z):=\frac{F''(z)}{F'(z)}-\frac{G''(z)}{G'(z)}.$$
Now, we consider two cases:\par
\textbf{Case-I} First we assume that $H\not\equiv 0$. Since $H(z)$ can be expressed as
$$H(z)=\frac{G'(z)}{F'(z)}\left(\frac{F'(z)}{G'(z)}\right)',$$
so all poles of $H$ are simple. Also, \textbf{poles of $H$ may occur} at
\begin{enumerate}
 \item poles of $F$ and $G$.
  \item zeros of $F'$ and $G'$,
\end{enumerate}
But using the Laurent series expansion of $H$, it is clear that \enquote{simple poles} of $F$ (hence, that of $G$) is a zero of $H$. Thus
\bea \label{equnb1.1} N(r,\infty;F|=1)=N(r,\infty;G|=1)\leq N(r,0;H) \eea
Using the lemma of logarithmic derivative and the first fundamental theorem, (\ref{equnb1.1}) can be written as
\bea \label{equnb1.2} N(r,\infty;F|=1)=N(r,\infty;G|=1)\leq N(r,\infty;H)+S(r) \eea
Since $F'(z)=-\frac{f'(z)P'(f(z))}{(P(f(z)))^{2}}$, $G'(z)=-\frac{g'(z)P'(g(z))}{(P(g(z)))^{2}}$ and $f$, $g$ share $(S_1,2)$ and $(S_2,0)$, by simple calculations, we can write
\bea \label{equnb1.3} N(r,\infty;H)&\leq& \sum_{j=1}^{k}\overline{N}(r,\beta_{j};f)+\ol{N}_{0}(r,0;f')+\ol{N}_{0}(r,0;g') \\
\nonumber &&+ \ol{N}(r,\infty;f)+\ol{N}(r,\infty;g)+\ol{N}_{\ast}(r,\infty;F,G)\eea
where $\ol{N}_{0}(r,0;f')$ denotes the reduced counting function of zeros of $f'$, which are not zeros of $\prod_{i=1}^{n}(f-\alpha_{i})\prod_{j=1}^{k}(f-\beta_{j})$, similarly, $\ol{N}_{0}(r,0;g')$ is defined.
Now, using the second fundamental theorem and (\ref{equnb1.2}), (\ref{equnb1.3}), we have
\bea \label{equnb1.4} && (n+k-1)\left(T(r,f)+T(r,g)\right)\\
\nonumber &\leq& \ol{N}(r,\infty;f)+\ol{N}(r,0;P(f))+\ol{N}(r,\infty;g)+\ol{N}(r,0;P(g))\\
\nonumber &+&\sum_{j=1}^{k}\left(\overline{N}(r,\beta_{j};f)+\overline{N}(r,\beta_{j};g)\right)-\ol{N}_{0}(r,0;f')-\ol{N}_{0}(r,0;g')+S(r)\\
\nonumber &\leq& 2\left(\ol{N}(r,\infty;f)+\ol{N}(r,\infty;g)\right)+\sum_{j=1}^{k}\left(2\overline{N}(r,\beta_{j};f)+\overline{N}(r,\beta_{j};g)\right)\\
\nonumber &+&\ol{N}(r,\infty;F|\geq2)+\ol{N}(r,\infty;G)+\ol{N}_{\ast}(r,\infty;F,G)+S(r).\eea
Noting that
\beas \ol{N}(r,\infty;F)-\frac{1}{2}N(r,\infty;F|=1)+\frac{1}{2}\ol{N}_{\ast}(r,\infty;F,G)\leq \frac{1}{2}N(r,\infty;F),\\
\ol{N}(r,\infty;G)-\frac{1}{2}N(r,\infty;G|=1)+\frac{1}{2}\ol{N}_{\ast}(r,\infty;F,G)\leq \frac{1}{2}N(r,\infty;G).
\eeas
Thus (\ref{equnb1.4}) can be written as
\bea \label{equnb1.5} && (n+k-1)\left(T(r,f)+T(r,g)\right)\\
\nonumber &\leq& 2\left(\ol{N}(r,\infty;f)+\ol{N}(r,\infty;g)\right)+(\frac{3}{2}k+\frac{n}{2})(T(r,f)+T(r,g))+S(r).\eea
which contradicts the assumption $n\geq k+7~~(\text{resp.}~ k+3)$. Thus $H\equiv 0$.\par
\textbf{Case-II} Next we assume that $H\equiv 0$. Then by integration, we have
\beas \frac{1}{P(f(z))}&\equiv&\frac{c_0}{P(g(z))}+c_{1},\eeas
where $c_{0}(\not= 0)$, $c_1$ are constants.\par
Here $n\geq5$ and $k\geq 3$. Thus, applying Lemma \ref{lemB} and noting the assumption that $P(z)$ is a \enquote{uniqueness polynomial}, we have $$f\equiv g.$$ This completes the proof.
\end{proof}
\medbreak
\begin{proof}[\textbf{Proof of the Theorem \ref{thbb1.1}}] Since $f$ and $g$ share $S_{2}$ IM, so
$$\sum_{j=1}^{k}\overline{N}(r,\beta_{j};f)=\sum_{j=1}^{k}\overline{N}(r,\beta_{j};g).$$
Now, we put $$F(z):=\frac{1}{P(f(z))}~~\text{and}~~G(z):=\frac{1}{P(g(z))}.$$
Let $S(r)$ be any function $S(r):(0,\infty)\rightarrow\mathbb{R}$ satisfying $S(r)=o(T(r,F)+T(r,G))$ for $r\rightarrow\infty$ outside a set of finite Lebesgue Measure.\par
Let
$$H(z):=\frac{F''(z)}{F'(z)}-\frac{G''(z)}{G'(z)}.$$
Now, we consider two cases:\par
\textbf{Case-I} First we assume that $H\not\equiv 0$. Now, proceeding as Case-I of Theorem \ref{thb1.1}, we have from (\ref{equnb1.4}) that
\bea \label{equnbb1.4} && (n+k-1)\left(T(r,f)+T(r,g)\right)\\
\nonumber &\leq& 2\left(\ol{N}(r,\infty;f)+\ol{N}(r,\infty;g)\right)+\sum_{j=1}^{k}\left(2\overline{N}(r,\beta_{j};f)+\overline{N}(r,\beta_{j};g)\right)\\
\nonumber &+&\ol{N}(r,\infty;F|\geq2)+\ol{N}(r,\infty;G)+\ol{N}_{\ast}(r,\infty;F,G)+S(r).\eea
Since $f$ and $g$ share the set $S_{1}$ with weight $3$, so
\beas &&\ol{N}(r,\infty;F)+\ol{N}(r,\infty;G)-N(r,\infty;F|=1)+\frac{5}{2}\ol{N}_{\ast}(r,\infty;F,G)\\
&&\leq \frac{1}{2}(N(r,\infty;F)+N(r,\infty;G)).
\eeas
Thus (\ref{equnbb1.4}) can be written as
\bea \label{equnbb1.5} && (n+k-1)\left(T(r,f)+T(r,g)\right)\\
\nonumber &\leq& 2\left(\ol{N}(r,\infty;f)+\ol{N}(r,\infty;g)\right)+3\sum_{j=1}^{k}\overline{N}(r,\beta_{j};f)\\
\nonumber &&+\frac{1}{2}(N(r,\infty;F)+N(r,\infty;G))-\frac{3}{2}\ol{N}_{\ast}(r,\infty;F,G)+S(r).\eea
Let
$$\varphi(z):=\frac{F'(z)}{F(z)}-\frac{G'(z)}{G(z)}.$$
Next we consider two cases:\\
\textbf{Subcase-I} Assume that $\varphi\not\equiv0$. Thus all poles of $\varphi$ are simple. Also, \textbf{poles of $\varphi$ may occur} at
\begin{enumerate}
 \item poles of $F$ and $G$,
  \item zeros of $F$ and $G$.
\end{enumerate}
Here we assume that $P'(z)$ have no simple zeros. Thus using the first fundamental theorem and the lemma of logarithmic derivative, we have
\beas 2\sum_{j=1}^{k}\overline{N}(r,\beta_{j};f)&\leq& N(r,0;\varphi)\\
&\leq& T(r,\varphi)+O(1),\\
&\leq& N(r,\infty;\varphi)+S(r,F)+S(r,G),\\
&\leq&\ol{N}(r,\infty;f)+\ol{N}(r,\infty;g)+\ol{N}_{\ast}(r,\infty;F,G)+S(r).
\eeas
Thus (\ref{equnbb1.5}) can be written as
\bea \label{equnbb1.6} && (\frac{n}{2}+k-1)\left(T(r,f)+T(r,g)\right)\\
\nonumber &\leq&\frac{7}{2}\left(\ol{N}(r,\infty;f)+\ol{N}(r,\infty;g)\right)+S(r),\eea
which is impossible if $f$ and $g$ both are entire functions. Also, this is impossible if $f$ and $g$ both are meromorphic  functions and $n\geq 10-2k$. Thus $H\equiv 0$.\\
\textbf{Subcase-II} Next we assume that $\varphi\equiv 0$. Then on integration, we have $$F\equiv A G,$$ where $A$ is a non-zero constant, which is impossible as $H\not\equiv0$.\\
\textbf{Case-II} Next we assume that $H\equiv 0$.  Then by integration, we have
\beas \frac{1}{P(f(z))}&\equiv&\frac{c_0}{P(g(z))}+c_{1},\eeas
where $c_{0}(\not= 0)$, $c_1$ are constants.\par
Here $n\geq5$, $k\geq 2$ and $P'(z)$ have no simple zeros. Thus, applying Lemma \ref{lemB} and noting the assumption that $P(z)$ is a \enquote{uniqueness polynomial}, we have $$f\equiv g.$$ This completes the proof.
\end{proof}
\section{Some Observations}
The natural query would be whether there exists different classes of unique range sets, but in this direction the number of results are not sufficient. Recently, V. H. An and P. N. Hoa (\cite{AH}) exhibited a new class of  unique range set for meromorphic functions. The unique range set is the zero set of the following polynomial:
\begin{equation}\label{Ah}
P(z)=z^{n}+(az+b)^{n}+c,
\end{equation}
where $n\geq 25$ is an integer, $a,b,c\in \mathbb{C}\setminus\{0\}$ with $c\not=\frac{b^d}{a^d}$, $a^{2d}\not=1$, $c\not=a^{d}b^{d}$, $c\not=\frac{(-1)^{d}b^{d}}{a^{2d}}$, $c\not=(-1)^{d}b^{d}$. Also, it was assumed that $P(z)$ has only simple zeros.\par
This URSM has $25$ elements but in literature there exist URSM with $11$ elements. Also, it was proved that any URSM (resp.URSE) must contain at least six (resp. five) [see, Theorem 10.59 (resp. Theorem 10.72), (\cite{YY})] elements. So, the challenging work is to exhibit URSM (resp. URSE) with elements $\leq 11(resp.~~ 7)$.
\medbreak
Now, we discuss a method of P. Li and  C. C. Yang (page 448, (\cite{LY})):\par
Let $S=\{\alpha_{1},\alpha_{2},\ldots,\alpha_{n}\}$ be a set with finite distinct elements of $\mathbb{C}$. Also, let $\alpha(\not=0)$ and $\beta$ be two complex constants. If $S$ is a unique range set, then the set $T=\{\alpha\alpha_{1}+\beta,\alpha\alpha_{2}+\beta,\ldots,\alpha\alpha_{n}+\beta\}$ is also a unique range set. \par
If $f$ and $g$ are two meromorphic functions sharing $T$ CM, then
\beas && (f-(\alpha\alpha_{1}+\beta))(f-(\alpha\alpha_{2}+\beta))\ldots(f-(\alpha\alpha_{n}+\beta))\\
&&=h(g-(\alpha\alpha_{1}+\beta))(g-(\alpha\alpha_{2}+\beta))\ldots(g-(\alpha\alpha_{n}+\beta)),\eeas
where $h$ is a meromorphic function whose zeros come from the poles of $g$ and the poles come from the poles of $f$. Thus\par
\beas && \left(\frac{f-\beta}{\alpha}-\alpha_{1}\right)\left(\frac{f-\beta}{\alpha}-\alpha_{2}\right)\ldots\left(\frac{f-\beta}{\alpha}-\alpha_{n}\right)\\
&&=h\left(\frac{g-\beta}{\alpha}-\alpha_{1}\right)\left(\frac{g-\beta}{\alpha}-\alpha_{2}\right)\ldots\left(\frac{g-\beta}{\alpha}-\alpha_{n}\right).\eeas
Thus $\frac{f-\beta}{\alpha}$ and $\frac{g-\beta}{\alpha}$ share $S$ CM. So, $\frac{f-\beta}{\alpha}\equiv \frac{g-\beta}{\alpha}$, i.e., $f\equiv g$. So, $T$ is a URSM.
\begin{rem} Since the examples of unique range sets  are few in numbers, thus this method ((page 448, \cite{LY}) helps us to construct new class of unique range sets. For examples,
\begin{enumerate}
\item [i)] The zero set of the following polynomial gives a new class of URSM (resp. URSE) with $13~(resp.~~7)$ elements:
$$P(z)=(z-\beta)^{n}+a(z-\beta)^{n-m}+b,$$
where $\beta\in \mathbb{C}$, $a$ and $b$ are two non-zero constants such that $z^{n}+az^{n-m}+b=0$ has no multiple root. Also, $m\geq 2~(resp.~~1)$, $n > 2m+8~(resp.~~2m+4)$ are integers with $n$ and $n-m$ having no common factors.
\item [ii)] The zero set of the following polynomial gives a new class of URSM with $11$ elements:
$$P(z)=\frac{(n-1)(n-2)}{2}(z-\beta)^{n}-n(n-2)(z-\beta)^{n-1}+\frac{n(n-1)}{2}(z-\beta)^{n-2}-c,$$
where $n\geq 11$ and $c\not= 0,1$, $\beta\in \mathbb{C}$.
\end{enumerate}
\end{rem}
$$~~~~~~~~~~~~~~~~~~~~$$
\medbreak
We have seen from the equation (\ref{yi}) that the zero set of the polynomial  $$P(z)=z^{n}+z^{n-1}+1$$ gives a URSE with $n(\geq 7)$ elements. Now, $$z^{n}P\left(\frac{1}{z}\right)=z^{n}+z+1.$$
Also, the zero set of the polynomial  $z^{n}P(\frac{1}{z})=z^{n}+z+1$ gives a URSE with $n(\geq 7)$ elements (Theorem 10.57, (\cite{YY})).\par
Again, the equation (\ref{fr}) gives that the zero set of the polynomial $$P(z)=\frac{(n-1)(n-2)}{2}z^{n}-n(n-2)z^{n-1}+\frac{n(n-1)}{2}z^{n-2}-\frac{c}{2},$$ where $c\not= 0,1,2$ is a URSM with $n(\geq 11)$ elements. Here,
$$z^{n}P\left(\frac{1}{z}\right)=-\frac{1}{2}\left(cz^{n}-n(n-1)z^{2}+2n(n-2)z-(n-1)(n-2)\right).$$
From the equation (\ref{al}), the zero set of the polynomial  $z^{n}P(\frac{1}{z})$ also gives a URSM with $n(\geq 11)$ elements.\par
Thus the following question is obvious:
\begin{ques} Let $P(z)$  be a non-constant polynomial having simple zeros. What are the characterizations of the polynomial $P(z)$ such that if the zero set of the polynomial $P(z)$ forms a unique range set, then the zero set of polynomial $z^{n}P(\frac{1}{z})$ must form a unique range set?
\end{ques}
\begin{center} {\bf Acknowledgement} \end{center}
The authors are  grateful to the anonymous referees for their valuable suggestions which considerably improved the presentation of the paper.\par
The research work of the first and the second authors are  supported by the Department of Higher Education, Science and Technology \text{\&} Biotechnology, Govt. of West Bengal under the sanction order no. 216(sanc) /ST/P/S\text{\&}T/16G-14/2018 dated 19/02/2019.\par
The third and fourth authors are thankful to the Council of Scientific and Industrial Research, HRDG, India for granting Junior Research Fellowship (File No.: 09/106(0179)/2018-EMR-I \text{\&}  08/525(0003)/2019-EMR-I respectively) during the tenure of which this work was done.


\begin{thebibliography}{99}
\bibitem{A} T. C. Alzahary, Meromorphic functions with weighted sharing of one set, Kyungpook Math. J., 47 (2007), 57-68.
\bibitem{An} T. T. H. An, Unique range sets for meromorphic functions constructed without an injectivity hypothesis, Taiwanese J. Math. 15(2) (2011), 697-709.
\bibitem{AH} V. H. An and P. N. Hoa, A new class of unique range sets for meromorphic functions, Annales Univ. Sci. Budapest., Sect. Comp., 47 (2018) 109-116.
\bibitem{Bai} X. Bai, Q. Han and A. Chen, On a result of H. Fujimoto, J. Math. Kyoto University,  49(3) (2009), 631-643.
\bibitem{BC} A. Banerjee, A new class of strong uniqueness polynomial satisfying Fujimoto’s conditions, Ann. Acad. Sci. Fenn. Math. 40 (2015) 465–474.
\bibitem{BCA}  A. Banerjee and B. Chakraborty, A new type of unique range set with deficient values, Afr. Mat., 26 (2015), 1561-1572
\bibitem{BC1} A. Banerjee and B. Chakraborty, Further results on the uniqueness of meromorphic functions  and their derivative counterpart sharing one or two sets, Jordan J. Math. Stat., 9 (2), (2016), 117-139.
\bibitem{BC2} A. Banerjee and B. Chakraborty, On some sufficient conditions of strong uniqueness polynomials, Adv. Pure Appl. Math., 8 (1) (2017), 1-13.
\bibitem{bcs} A. Banerjee, B. Chakraborty and S. Mallick, Further Investigations on Fujimoto Type Strong Uniqueness Polynomials, Filomat 31, No. 16 (2017), 5203-5216.
\bibitem{BL} A. Banerjee and I. Lahiri, A uniqueness polynomial generating a unique range set and vise versa, Comput. Methods Funct. Theory, 12(2) (2012), 527-539.
\bibitem{FR} G. Frank and M. Reinders, A unique range set for meromorphic functions with $11$ elements, Complex Var. Theory Appl., 37(1) (1998), 185-193.
\bibitem{F1} H. Fujimoto, On uniqueness of meromorphic functions sharing finite sets, Amer. J. Math., 122 (2000), 1175-1203.
\bibitem{F2} H. Fujimoto, On uniqueness polynomials for meromorphic functions, Nagoya Math. J., 170 (2003), 33-46.
\bibitem{G} F. Gross, On the distribution of values of meromorphic functionas, Trans. Amer. Math. Soc., 131 (1968), 199-214.
\bibitem{G2} F. Gross, Factorization of meromorphic functions and some open problems, Complex Analysis, Lecture Notes in Math., 599 (Proc. Conf. Univ. Kentucky, Lexington, Ky, 1976), Berlin, Heidelberg, New York: Spring-Verlag, (1977), 51-67.
\bibitem{GY} F. Gross and C. C. Yang, On preimage and range sets of meromorphic functions, Proc. Japan Acad., 58 (1982), 17-20.
\bibitem{H} W. K. Hayman, Meromorphic Functions, The Clarendon Press, Oxford, 1964.
\bibitem{L} I. Lahiri, Weighted sharing and uniqueness of meromorphic functions, Nagoya Math. J., 161 (2001), 193-206.
\bibitem{LY} P. Li and C. C. Yang, Some further results on the unique range sets of meromorphic functions, Kodai Math. J., 18 (1995), 437-450.
\bibitem{YY} C. C. Yang and H. X. Yi, Uniqueness Theory of Meromorphic Functions, Mathematics and its Applications, 557, Kluwer Academic Publishers Group, Dordrecht, 2003.
\bibitem{Y90} H. X. Yi, Uniqueness of meromorphic functions and a question of C. C. Yang,  Complex Var. Theory Appl., 14(1-4) (1990), 169-176.
\bibitem{H94} H. X. Yi, On a problem of Gross, Sci. China, Ser. A, 24 (1994), 1137-1144.
\bibitem{2Yi} H. X. Yi, Uniqueness of meromorphic functions and question of Gross, Science in China (Series A), 37 (1994), 802-813.
\bibitem{H96} H. X. Yi, Unicity theorems for meromorphic or entire functions III, Bull. Austral. Math. Soc.,53 (1996), 71-82.
\bibitem{2Yi98} H. X. Yi, On a question of Gross concerning uniqueness of entire functions, Bull. Austral. Math. Soc., 57 (1998), 343-349.
\bibitem{2YL} B. Yi and Y. H. Li, The uniqueness of meromorphic functions that share two sets with CM, Acta Math. Sin., Chin. Ser., 55 (2012), 363-368.
\end{thebibliography}
\end{document}